\documentclass{article}

\usepackage[preprint]{neurips_2026}

\RequirePackage{amsthm,amsmath,amsfonts,amssymb}
\RequirePackage{graphicx}
\usepackage{url}
\usepackage[T1]{fontenc}
\usepackage{float}
\usepackage{babel}
\usepackage{amsmath}
\usepackage{amsfonts}
\usepackage{bbm}
\usepackage{algorithm}
\usepackage{multicol}
\usepackage[usenames,dvipsnames]{xcolor}
\usepackage[nottoc]{tocbibind}
\usepackage{comment}
\RequirePackage{hypernat}
\usepackage{sidecap}

\usepackage{comment}

\allowdisplaybreaks

\theoremstyle{plain}

\newtheorem{theorem}{Theorem}[section]
\newtheorem{lemma}[theorem]{Lemma}
\newtheorem{proposition}[theorem]{Proposition}
\newtheorem{corollary}[theorem]{Corollary}
\newtheorem{remark}[theorem]{Remark}
\newtheorem{assumption}[theorem]{Assumption}

\theoremstyle{remark}

\def \bbeta{\boldsymbol{\beta}}
\def \bx{\mathbf{x}}

\def \bZ{\mathbf{Z}}
\def \bbeta{\boldsymbol{\beta}}

\def \Xm {\mathbb{X}_{\rm pool}}
\def \tr {\mathrm{Tr}}

\def \bX{\mathbf{X}}

\def \be{\begin{align*}}
\def \ee{\end{align*}}

\def \I{\mathcal{I}}

\def \E{\mathbb{E}}
\def \P{\mathbb{P}}

\def \bmu{\pmb{\mu}}

\def \bZ {{\boldsymbol{Z}}}
\def \bX{\boldsymbol{X}}
\def \I {\mathbb{I}}

\newcommand{\op}{\mathrm{op}}

\title{Pooling Versus Ensembling for Ridge Regression Under Covariate Shift}
\usepackage{times}

\author{%
  Maya~Ramchandran\\
  Simons Institute for the Theory of Computing\\
  UC Berkeley\\
  \texttt{maya.ramchandran@berkeley.edu} \\
  \And
  Rajarshi Mukherjee \\
  Department of Biostatistics\\
  Harvard T.H. Chan School of Public Health \\
  \texttt{ram521@mail.harvard.edu} \\
}

\begin{document}

\maketitle

\begin{abstract}

Datasets in many settings naturally partition into clusters arising from sub-populations, batch effects, or aggregation across multiple sources. A common response to such heterogeneity is to ensemble learners trained on each cluster rather than fit a single model to the pooled data. Prior work motivating such approaches has typically considered settings in which both the covariate distribution and the conditional outcome model differ across clusters; the role of cluster-aware partitioning and ensembling based solely on the covariate distribution remains to be explored. We address this case for ridge-regularized least-squares regression under a linear outcome model and consider all ridge penalty values $\lambda \geq 0$, including the special case of the ridgeless predictor at $\lambda = 0$. By considering both fixed-effects and random-effects models, we argue that under random effects, an optimally tuned pooled ridge predictor always outperforms ensembles of individually optimally tuned predictors. For fixed effects, we derive a general formula for the pooled and ensembled predictors to characterize the role of both regression coefficients as well as the predictor distribution shifts. %We prove that when $\lambda > 0$, the predictor trained on the pooled data has uniformly smaller asymptotic prediction risk than the per-cluster ensemble for every value of the ridge penalty and across variations in cluster geometry. When $\lambda = 0$, this pattern depends on whether each of the base learners is in the under-parameterized regime, interpolation peak, or over-parameterized regime.  Simulations confirm the general dominance of the pooled learner across regimes, and reveal a second finding: cluster structure is essentially risk-neutral for both learners, with the gap between the two strategies governed by the ratio of the number of covariates to the number of samples rather than by cluster geometry. 
Together, these results generalize prior risk analyses of bagging and random-partition estimation using ridge and ridgeless regression predictors from the i.i.d. setting to encompass covariate shift and heterogeneity-aware partition structure.
\end{abstract}

\section{Introduction}
\label{sec:intro}
\par Heterogeneity in the distribution of covariates is a common artifact of modern datasets, where data can be comprised of natural sub-populations, clusters, batch effects, and multiple sources \citep{goh2017batch, chauhan2010data}. A long-standing question in this setting is whether prediction models should explicitly acknowledge this structure or ignore it. This question has a substantial classical literature on generalization across multiple distributions \citep{crammer2008learning, bendavid2010theory, mansour2009domain} and on pooled estimation under heterogeneity \citep{meinshausen2015maximin, li2022transfer}. One natural response has been the development of ensembling frameworks, in which the training data is first separated into its component clusters or natural partitions, a learning algorithm is trained independently on each cluster, and the resulting single-cluster predictors are combined using weights that reward cross-cluster generalization on the training set \citep{Patil2018, deodhar2007, Trivedi2015, Ramchandran2020, ramchandran2021ensembling,Guan2019a}. Such ensembles have been shown empirically to generalize more reliably than a single model trained on the pooled data across a range of base learners, including neural networks, random forests, and regularized regression. 

Throughout the paper, we refer to this strategy simply as \emph{ensembling}, and contrast it with \emph{pooling}, which refers to training a single model on the entire training set with the same base learner. More specifically, we compare two procedures built from the same base learner. The \emph{pooled} procedure trains a single predictor on the pooled training data. The \emph{ensemble} procedure trains one predictor separately on each known cluster and then averages, or more generally convexly combines, their predictions. Our goal is to determine whether the second strategy offers any advantage when clusters differ only in their covariate distributions and share the same conditional outcome model.

\par Although ensembling has been motivated by settings in which both the covariate distribution and the conditional outcome model vary across sources, it is far less clear what role cluster-aware partitioning plays when the heterogeneity is purely in the covariates. In this paper, we isolate this question by studying a setting with generalized covariate shift but no concept shift. Specifically, we analyze studying ridge-regularized least-squares regression with outcomes generated by a well-specified linear model whose coefficients are shared across clusters and test points; only the marginal distribution of the covariates varies both within the training set and between train and test sets.

%In this setting we show that for ridge regression where the regularization parameter $\lambda > 0$, merging uniformly dominates ensembling across every ridge penalty and every cluster geometry. For ridgeless regression at $\lambda = 0$, the relationship is dependent on the interplay between the regimes of the \emph{Pooled} and \emph{Ensemble} learners, but when both learners are in the under- or over-parameterized regimes, merging again dominates ensembling. More surprisingly, we find that cluster geometry is essentially risk-neutral for both approaches; the ensembling-versus-merging gap is governed by the aspect ratio (the ratio of the number of covariates to the number of samples), not by the angles, norms, or covariances of the cluster means.

\subsection{Summary of Contributions}

\paragraph{Theoretical results.} we provide general formulas for the risk of the ensemble and pooled learners using ridge-regularized least-squares regression as the base learner under a well-specified linear outcome model and arbitrary cluster-specific covariate distributions. When the ridge penalty $\lambda > 0$, we prove that the pooled predictor has uniformly smaller (asymptotic) prediction risk than the ensemble built on the true cluster partitions. This holds independently of the angles, norms, and covariance structures of the cluster means. When the ridge penalty $\lambda = 0$ for the ridgeless predictor, we show that if both learners are in either the underparameterized or overparameterized regime, the pooled predictor similarly outperforms the ensemble; the only point at which this relationship switches is when the pooled predictor is at the interpolation threshold where the total number of samples equals the number of covariates, whereas the base learners comprising the ensemble are in the over-parameterized regime. In this case, the implicit regularization caused by overparameterization allows the ensemble to achieve lower variance than the pooled predictor.

\paragraph{Simulation studies across regimes.} We confirm our theoretical results with simulations for both ridge and ridgeless predictors that span the underparameterized regime, the interpolation threshold, and the overparameterized regime. In every configuration we consider - varying cluster mean angles, cluster mean norms, and cluster covariance structures - the pooled predictor matches or outperforms the ensemble when $\lambda > 0$ (ridge regression). For ridgeless regression at $\lambda = 0$, the simulations confirm the theoretical result that the relationship between approaches depends on the regime, with the pooled learner largely dominating the ensemble when both are under- or over-parameterized. Furthermore, we find that the ensembling-vs-pooling gap is driven by the aspect ratio $p/n$, as opposed to the covariate distribution or partition strategy.

\subsection{Related Work}
\label{sec:related-work}

Our work builds on two closely related lines of recent research. \citet{hastie2022surprises} provide the exact asymptotic risk formulas for ridge and ridgeless regression that underpin the qualitative behavior we observe, and \citet{patil2023bagging} analyze a class of bagging procedures (subagging and splagging) in a supervised setting similar to ours. We defer discussion of the related unsupervised stacked-SVD setting of \citet{baharav2025stackedsvd} to Appendix~\ref{app:related-work}, since our focus is on supervised prediction.

\paragraph{Asymptotic risk of ridge and ridgeless regression.}
A line of recent work has produced precise asymptotic risk characterizations for high-dimensional regression in the proportional regime $p/n \to \gamma$, building on classical random matrix theory. \citet{dobriban2018high} provide early asymptotic risk formulas for ridge regression and classification in this regime. \citet{hastie2022surprises} subsequently derive exact formulas for the prediction risk of the minimum-norm (ridgeless) least-squares estimator and of ridge regression under a well-specified linear model with arbitrary covariate covariance. Their formulas make explicit several phenomena central to our experimental story: the ridgeless risk diverges at the interpolation threshold $\gamma = 1$ and decreases beyond it (the ``double descent'' shape, identified empirically by \citet{belkin2019reconciling} and analyzed as \emph{benign overfitting} by \citet{bartlett2020benign}), and even a small ridge penalty smooths the interpolation peak away. We use these results as a direct point of reference when interpreting the risk profile of our two learners.

\paragraph{Bagging in overparameterized regression.}
\citet{patil2023bagging} characterize the asymptotic prediction risk of two bagging variants for ridge and ridgeless least-squares predictors in the proportional-asymptotic regime: \emph{subagging}, in which $K$ predictors are trained on possibly overlapping random subsamples, and \emph{splagging} (split-aggregation), in which the data is partitioned via uniform random permutation into $K$ disjoint equal-size subsets and the per-partition predictors are averaged. Splagging is structurally similar to our ensemble learner with $K = 2$, with two key differences: their partitions are uniformly random, and they assume i.i.d.\ train and test data with no covariate shift or internal heterogeneity. Their main results are established under a well-specified linear model with arbitrary covariate covariance and arbitrary true coefficient vector. They show that properly tuned ridge regression is optimal among the methods considered, with no ensembling strategy surpassing it. One notable finding is that under isotropic covariates, the optimally-tuned subagged ridgeless predictor matches the asymptotic risk of optimally-tuned ridge; that is, bagging implicitly performs the regularization that ridge performs explicitly. This is further illustrated by comparing ensembles of ridgeless predictors trained on subsets to a single ridgeless predictor trained on the full dataset: the ensemble shows benefit largely at the full predictor's interpolation threshold, where each ensemble component has been pushed into the overparameterized regime and exhibits variance stabilization.

\citet{lejeune2020implicit} make this implicit-regularization view explicit, showing that in the proportional asymptotic regime, ensembles of ordinary least-squares predictors trained on random subsets behave like a single predictor with an effective ridge penalty determined by the subsample size. In a complementary fixed-dimension kernel-ridge setting, \citet{zhang2015divide} analyze a divide-and-conquer estimator that partitions the data into $K$ disjoint subsets, fits kernel ridge regression on each, and averages the predictors; they show that for $K$ not too large, this matches the convergence rate of the predictor trained on the full dataset. Together, these results show that bagging and partitioning act as implicit regularization, with primary gains for linear predictors concentrated in regimes where interpolation would otherwise produce high variance; none, however, exceed optimally tuned ridge. Our work extends this picture by asking how covariate heterogeneity and heterogeneity-aware partitioning affect the benefit of ensembling versus pooling: rather than assuming i.i.d.\ data with uniformly random partitions, we study a mixture of clusters with heterogeneous covariate distributions and partition along the underlying cluster structure, investigating whether the conclusions of \citet{patil2023bagging} continue to hold.

\section{Main Results}
We organize the main results into subsections covering the mathematical setup, the analyses of the ensemble and pooled learners, and their comparison.

\subsection{Setup}\label{sec:setup}
We theoretically compare the asymptotic risk of an ensemble predictor to that of a pooled predictor under a linear outcome model. The data is comprised of $K$ clusters (or sources): $\mathcal{D}_\ell=(y_{i,\ell},\bX_{i,\ell})_{i=1}^{n_\ell}$, $\ell=1,\ldots,K$, with outcome $y\in\mathbb{R}$ and covariates $\bX\in\mathbb{R}^p$. Let $\hat{f}_\ell$ be a predictor fit on each cluster $\mathcal{D}_\ell$, and let $\hat{f}_{\rm pool}$ be a predictor fit on the pooled dataset $\mathcal{D}=\{\mathcal{D}_\ell:\ell=1,\ldots,K\}$. Our main results compare the prediction error weighted combinations of the $\hat{f}_\ell$'s to that of $\hat{f}_{\rm pool}$, with all learners fit by ridge-regularized least squares.

To stage the analysis, we define
\[
\mathbf{Y}_\ell=(Y_{1,\ell},\ldots,Y_{n_\ell,\ell})^\top\in\mathbb{R}^{n_\ell},
\qquad
\mathbb{X}_\ell=(\bX_{1,\ell},\ldots,\bX_{n_\ell,\ell})^\top\in\mathbb{R}^{n_\ell\times p}
\]
for the outcome vector and design matrix in cluster $\ell$, and let $\mathbf{Y}_{\rm pool}=(\mathbf{Y}_1^\top,\ldots,\mathbf{Y}_K^\top)^\top$ and $\mathbb{X}_{\rm pool}=(\mathbb{X}_1^\top,\ldots,\mathbb{X}_K^\top)^\top$ denote their pooled counterparts. The ridge predictors are then
\begin{align*}
\hat{f}_\ell(\bx)
 &=\hat{\boldsymbol{\beta}}_{\lambda_\ell}^\top\bx,
&\hat{\boldsymbol{\beta}}_{\lambda_\ell}
 &=(\mathbb{X}_\ell^\top\mathbb{X}_\ell+\lambda_\ell\mathbb{I})^{-1}\mathbb{X}_\ell^\top\mathbf{Y}_\ell,
&\lambda_\ell&>0;\\
\hat{f}_{\rm pool}(\bx)
 &=\hat{\boldsymbol{\beta}}_{\lambda_0}^\top\bx,
&\hat{\boldsymbol{\beta}}_{\lambda_0}
 &=(\mathbb{X}_{\rm pool}^\top\mathbb{X}_{\rm pool}+\lambda_0\mathbb{I})^{-1}\mathbb{X}_{\rm pool}^\top\mathbf{Y}_{\rm pool},
&\lambda_0&>0,
\end{align*}
with ridgeless ($\lambda\downarrow 0$) counterparts defined via the Moore-Penrose pseudoinverse:
\begin{align*}
\hat{f}_{\ell,+}(\bx)
 &=\hat{\boldsymbol{\beta}}_{+,\ell}^\top\bx,
&\hat{\boldsymbol{\beta}}_{+,\ell}
 &=(\mathbb{X}_\ell^\top\mathbb{X}_\ell)^+\mathbb{X}_\ell^\top\mathbf{Y}_\ell;\\
\hat{f}_{\rm pool,+}(\bx)
 &=\hat{\boldsymbol{\beta}}_{+}^\top\bx,
&\hat{\boldsymbol{\beta}}_{+}
 &=(\mathbb{X}_{\rm pool}^\top\mathbb{X}_{\rm pool})^+\mathbb{X}_{\rm pool}^\top\mathbf{Y}_{\rm pool}.
\end{align*}
We work under the following assumptions.

\begin{assumption}\label{assumptions} The following hold for $\ell=1,\ldots,K$.
\begin{enumerate}
\item[(i)] \textbf{Covariate Distribution and Mean Shifts:} $\bX_{i,\ell}=\bmu_\ell+\Sigma^{1/2}\mathbf{Z}_{i,\ell}$, where $\mathbf{Z}_{i,\ell}\in\mathbb{R}^p$ has i.i.d.\ mean-zero subgaussian coordinates and $\Sigma$ is positive definite with $\eta\le\lambda_1(\Sigma)\le\lambda_p(\Sigma)<\eta^{-1}$ for some fixed $\eta>0$. Moreover, $K$ is fixed and the cluster means $\bmu_1,\ldots,\bmu_K$ have uniformly bounded Euclidean norms.

\item[(ii)] \textbf{Outcome Regression:} $Y_{i,\ell}=\boldsymbol{\beta}^\top\bX_{i,\ell}+\varepsilon_{i,\ell}$, with $\varepsilon_{i,\ell}\perp\bX_{i,\ell}$ subgaussian, mean $0$, and variance $\sigma_\varepsilon^2$.

\item[(iii)] \textbf{Test Data:} $y^\star=\bbeta^\top\bx^\star+\varepsilon^\star$ with $\E(\bx^\star)=0$, $\mathrm{Var}(\bx^\star)=\mathbb{I}$, $\varepsilon^\star\perp\bx^\star$, $\E(\varepsilon^\star)=0$, $\mathrm{Var}(\varepsilon^\star)=\sigma_\star^2$.

\item[(iv)] \textbf{Asymptotic Regime:} For $\ell\ge 0$, $p/n_\ell\to\gamma_\ell\in[0,\infty)$ and $\lambda_\ell=\rho_\ell n_\ell$ for fixed $\rho_\ell\in(0,\infty)$, where $n_0=n=\sum_{\ell=1}^K n_\ell$.
\end{enumerate}
\end{assumption}

\begin{remark}
Assumption~\ref{assumptions} focuses on mean shifts as the source of covariate heterogeneity. This choice keeps the comparison between pooled and ensemble learners analytically transparent while still capturing a central form of between-cluster variation. The same random matrix arguments can be extended to more general forms of covariate shift, though at the cost of heavier notation.
 Our choice of a centered, isotropic test data point $\bx^\star$ is also driven by the same goal of keeping our theoretical results relatively concise. Finally, the cluster labels are treated as known. This is natural when the data come from distinct studies or populations. When clusters are estimated from the data, additional uncertainty enters the analysis; we leave this extension for future work.
\end{remark}

Under these assumptions, we study the risk of ensemble versus pooled learners. For convex weights $\mathbf{w}=(w_1,\ldots,w_K)\in\mathbb{R}_+^K$ with $\sum_{\ell=1}^K w_\ell=1$ and $\boldsymbol{\rho}=(\rho_1,\ldots,\rho_K)$ with $\lambda_\ell=\rho_\ell n_\ell$, define the ensemble estimators
\[
\widehat\bbeta_{\mathrm{ens},\boldsymbol{\rho}}(\mathbf{w})=\sum_{\ell=1}^K w_\ell\widehat\bbeta_{\lambda_\ell},
\qquad
\widehat\bbeta_{\mathrm{ens},+}(\mathbf{w})=\sum_{\ell=1}^K w_\ell\widehat\bbeta_{+,\ell},
\qquad w_\ell\ge 0,\ \sum_{\ell=1}^K w_\ell=1.
\]
Conditional on $\{\mathbb{X}_\ell\}_{\ell=1}^K$, their mean squared prediction errors are
\begin{align*}
R_{\mathrm{ens},\boldsymbol{\rho}}(\mathbf{w})
 &=\E\!\left[(y^\star-\bx^{\star\top}\widehat\bbeta_{\mathrm{ens},\boldsymbol{\rho}}(\mathbf{w}))^2\right],
&R_{\mathrm{ens},+}(\mathbf{w})
 &=\E\!\left[(y^\star-\bx^{\star\top}\widehat\bbeta_{\mathrm{ens},+}(\mathbf{w}))^2\right].
\end{align*}
The pooled risks are defined analogously: for $\lambda_0=\rho_0 n_0$,
\begin{align*}
R_{\mathrm{pool},\rho_0}
 &=\E\!\left[(y^\star-\bx^{\star\top}\widehat\bbeta_{\lambda_0})^2\right],
&R_{\mathrm{pool},+}
 &=\E\!\left[(y^\star-\bx^{\star\top}\widehat\bbeta_{\mathrm{pool},+})^2\right].
\end{align*}

The next two subsections analyze the ensemble and pooled methods in turn; we compare them in Section~\ref{sec:compare}. Stating the results requires further notation. Let $F_{\Sigma,p}$ denote the empirical spectral distribution of $\Sigma$, and for $\gamma,\rho>0$ let $m_\gamma(\rho)$ be the unique solution \citep{knowles2017anisotropic} of
\[
\frac{1}{m_\gamma(\rho)}=\rho+\gamma\int\frac{x}{1+xm_\gamma(\rho)}\,dF_{\Sigma,p}(x).
\]
Define $Q_\gamma(\rho)=[\rho\{\I+m_\gamma(\rho)\Sigma\}]^{-1}$, $a_\gamma(\rho)=\tfrac{1}{p}\tr Q_\gamma(\rho)$, $s_\gamma(\rho)=\tfrac{1}{p}\tr\{\Sigma Q_\gamma(\rho)\}$, and the variance functional
\[
V_\gamma(\rho)=\gamma\{a_\gamma(\rho)+\rho a_\gamma'(\rho)\},
\]
with the derivative taken with respect to $\rho$. Cluster-specific values are written $Q_\ell=Q_{\gamma_\ell}(\rho_\ell)$, $a_\ell=a_{\gamma_\ell}(\rho_\ell)$, $V_\ell=V_{\gamma_\ell}(\rho_\ell)$. Finally, for deterministic $U\in\mathbb{R}^{p\times r}$ of fixed rank $r$ and uniformly bounded operator norm, set
\[
\mathcal K_{\gamma,U}(\rho)
=Q_\gamma(\rho)
-Q_\gamma(\rho)U
 \bigl[\{1+\gamma s_\gamma(\rho)\}\I+U^\top Q_\gamma(\rho)U\bigr]^{-1}
 U^\top Q_\gamma(\rho),
\]
which when $U=\bmu$ is a single vector reduces to
\[
\mathcal K_{\gamma,\bmu}(\rho)
=Q_\gamma(\rho)
-\frac{Q_\gamma(\rho)\bmu\bmu^\top Q_\gamma(\rho)}
       {1+\gamma s_\gamma(\rho)+\bmu^\top Q_\gamma(\rho)\bmu}.
\]
We are now ready to state the main results.

\subsection{Analysis of Ridge Regression Ensembles}\label{sec:ensemble}
We split by the assumption on $\boldsymbol{\beta}$, treating the random- and fixed-effects cases in turn. The next theorem collects both.

\begin{theorem}\label{thm:gen_ensemble}
Assume \ref{assumptions}.
\begin{enumerate}
\item\label{thm:gen_ensemble_re} Let $\bbeta$ be random with $\E(\bbeta)=0$ and $\mathrm{Var}(\bbeta)=\tau^2\I/p$. Then
\[
\begin{aligned}
R_{\mathrm{ens},\boldsymbol{\rho}}(\mathbf{w})
&=\sigma_\star^2
+\sum_{\ell=1}^K w_\ell^2
\left\{\tau^2\rho_\ell^2[-a_{\gamma_\ell}'(\rho_\ell)]
       +\sigma_\varepsilon^2 V_{\gamma_\ell}(\rho_\ell)\right\}\\
&\quad+2\tau^2\sum_{1\le\ell<j\le K}
       w_\ell w_j\rho_\ell\rho_j\,
       \tfrac{1}{p}\tr(Q_\ell Q_j)
+o(1),\\
R_{\mathrm{ens},+}(\mathbf{w})
&=\lim_{\substack{\rho_\ell\downarrow 0\\ \ell=1,\ldots,K}}
   R_{\mathrm{ens},\boldsymbol{\rho}}(\mathbf{w})+o(1).
\end{aligned}
\]

\item\label{thm:gen_ensemble_fe} If $\limsup\|\bbeta\|<\infty$, set $\mathcal K_\ell(\rho)=\mathcal K_{\gamma_\ell,\bmu_\ell}(\rho)$ and $\psi_\ell(\rho)=\bbeta^\top\mathcal K_\ell(\rho)\bbeta$. Then
\[
\begin{aligned}
R_{\mathrm{ens},\boldsymbol{\rho}}(\mathbf{w})
&=\sigma_\star^2
+\sum_{\ell=1}^K w_\ell^2
\left\{\rho_\ell^2[-\psi_\ell'(\rho_\ell)]
       +\sigma_\varepsilon^2 V_{\gamma_\ell}(\rho_\ell)\right\}\\
&\quad+2\sum_{1\le\ell<j\le K}
       w_\ell w_j\rho_\ell\rho_j\,
       \bbeta^\top\mathcal K_\ell(\rho_\ell)\mathcal K_j(\rho_j)\bbeta
+o(1),\\
R_{\mathrm{ens},+}(\mathbf{w})
&=\lim_{\substack{\rho_\ell\downarrow 0\\ \ell=1,\ldots,K}}
   R_{\mathrm{ens},\boldsymbol{\rho}}(\mathbf{w})+o(1).
\end{aligned}
\]
\end{enumerate}
\end{theorem}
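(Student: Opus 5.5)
Since $\bx^\star$ is centered and isotropic and independent of the training data, we start from the exact conditional decomposition
\[
R_{\mathrm{ens},\boldsymbol{\rho}}(\mathbf{w})=\sigma_\star^2+\Bigl\|\sum_{\ell}w_\ell(\widehat\bbeta_{\lambda_\ell}-\bbeta)\Bigr\|^2 .
\]
Write $\widehat\Sigma_\ell=\mathbb{X}_\ell^\top\mathbb{X}_\ell/n_\ell$ and $R_\ell=(\widehat\Sigma_\ell+\rho_\ell\I)^{-1}$. Then
\[
\widehat\bbeta_{\lambda_\ell}-\bbeta=-\rho_\ell R_\ell\bbeta+R_\ell\mathbb{X}_\ell^\top\boldsymbol{\varepsilon}_\ell/n_\ell .
\]
Expanding the square gives three kinds of terms. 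There are bias products $\rho_\ell\rho_j\bbeta^\top R_\ell R_j\bbeta$, noise quadratic forms, and bias--noise cross terms.

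The cross terms are linear in the independent centered noise. Conditional on the designs and $\bbeta$, their variance is $O(1/n)$ by Hanson--Wright, so they are $o(1)$. For $\ell\neq j$ the noise--noise terms are bilinear in independent noise vectors and vanish by the same argument. The diagonal noise term concentrates around
\[
\sigma_\varepsilon^2\tfrac{1}{n_\ell}\tr\bigl(R_\ell^2\widehat\Sigma_\ell\bigr)=\sigma_\varepsilon^2\tfrac{1}{n_\ell}\bigl[\tr R_\ell-\rho_\ell\tr R_\ell^2\bigr].
\]
Because $\rho\mapsto\frac1p\tr R_\ell(\rho)$ is analytic and converges to $a_{\gamma_\ell}(\rho)$, its derivative converges to $a'_{\gamma_\ell}$ (Vitali/Montel). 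Since $\frac{d}{d\rho}\frac1p\tr R_\ell=-\frac1p\tr R_\ell^2$, the noise term becomes $\gamma_\ell(a_\ell+\rho_\ell a_\ell')=V_\ell$.

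The bias requires deterministic equivalents for the resolvents. Write $\mathbb{X}_\ell=\mathbf{1}\bmu_\ell^\top+\mathbb{Z}_\ell\Sigma^{1/2}$. Then $\widehat\Sigma_\ell$ is a rank-one-type perturbation of the centered sample covariance $\widehat S_\ell$: the mean enters through the empirical mean $\bar\bx_\ell=\bmu_\ell+O_P(\text{fluctuation})$. We invoke the anisotropic local law \citep{knowles2017anisotropic}, which gives $(\widehat S_\ell+\rho\I)^{-1}\approx Q_\gamma(\rho)$ in the sense of bounded quadratic forms and normalized traces. To handle the mean, we apply Woodbury to the low-rank perturbation. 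The effective denominator $1+\gamma s_\gamma(\rho)$ arises because $\bar\bx_\ell$ is correlated with $\widehat S_\ell$; the cleanest route is the block-resolvent (linearization) treatment of $\mathbb{X}_\ell^\top\mathbb{X}_\ell$ with a spike $\mathbf{1}\bmu^\top$ in the data matrix. This yields $R_\ell\approx\mathcal K_\ell(\rho_\ell)$ for quadratic forms in bounded deterministic vectors.

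The diagonal bias term $\rho_\ell^2\bbeta^\top R_\ell^2\bbeta$ is handled as before, as the negative derivative of the analytic function $\rho\mapsto\bbeta^\top R_\ell(\rho)\bbeta\to\psi_\ell(\rho)$, which gives $\rho_\ell^2[-\psi'_\ell]$. For $\ell\ne j$, $R_\ell$ and $R_j$ are independent. Conditioning on $\mathbb{X}_j$ and applying the isotropic law to $R_\ell$ with the bounded vectors $\bbeta$ and $R_j\bbeta$ gives $\bbeta^\top\mathcal K_\ell R_j\bbeta$. Repeating the step for $R_j$ gives $\bbeta^\top\mathcal K_\ell\mathcal K_j\bbeta$.

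In the random-effects case we average over $\bbeta$ first, and $\bbeta^\top A\bbeta$ concentrates at $\tau^2\frac1p\tr A$. The rank-one correction in $\mathcal K$ changes normalized traces only by $O(1/p)$, so $\psi_\ell$ is replaced by $\tau^2a_\ell$ and the cross term by $\tau^2\frac1p\tr(Q_\ell Q_j)$.

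For the ridgeless statement, we note that all finite-sample quantities are monotone or continuous in $\rho$ as $\rho\downarrow0$ for min-norm least squares. We then interchange the limits $n\to\infty$ and $\rho\downarrow0$ using uniform-in-$\rho$ convergence on $(0,\rho_{\max}]$, as in \citet{hastie2022surprises} and \citet{patil2023bagging}. This uses equicontinuity of the risk in $\rho$, obtained from bounds on the smallest nonzero eigenvalue when $\gamma_\ell\neq1$.

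\emph{Main obstacle.} The hardest step is the deterministic equivalent $R_\ell\approx\mathcal K_\ell$ with uncentered covariates. We must justify the specific $1+\gamma s_\gamma$ denominator coming from the interaction between the sample mean and the centered resolvent, and do so uniformly in $\rho$ so that the derivative and limit interchanges are valid.
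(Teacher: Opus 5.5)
Your skeleton matches the paper's proof. You use the same bias/variance/cross-term decomposition. The single-cluster terms come from the $K=1$ case of the pooled analysis with the $-\frac{d}{d\rho}$ trick; your Vitali argument justifies the derivative step more cleanly than the paper does. The cross terms come from conditioning on the independent cluster $\mathbb{X}_j$ and applying a deterministic equivalent to $R_\ell$ with the bounded vectors $\bbeta$ and $R_j\bbeta$. The ridgeless limit uses a lower bound on the nonzero spectrum when $\gamma_\ell\neq 1$.

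Part~1 does not need $\mathcal K_\ell$. The mean shift changes $\widehat\Sigma_\ell$ by a matrix of rank at most $3$, which moves normalized traces by $O(1/p)$. That is exactly the paper's argument, so your random-effects conclusion stands.

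\textbf{The gap is in part~2.} There the equivalent $u^\top R_\ell(\rho)v\to u^\top\mathcal K_\ell(\rho)v$ for the uncentered design is asserted (``the linearization \ldots\ yields'') rather than derived, and it is the entire content of the fixed-effects formula. Your heuristic for it also points at the wrong mechanism. The fluctuation $r_\ell:=\bar\bx_\ell-\bmu_\ell=\Sigma^{1/2}\bar z_\ell$ is not lower order, since $\|r_\ell\|^2\approx n_\ell^{-1}\tr\Sigma\asymp\gamma_\ell$. Dropping it in the Woodbury step gives the wrong denominator $1+\bmu_\ell^\top Q_\ell\bmu_\ell$. The extra $\gamma_\ell s_{\gamma_\ell}(\rho)$ is also not a correlation effect: for Gaussian data, $r_\ell$ is independent of the empirically centered covariance, and the term is still present.

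\textbf{What must be shown.} Let $X^0_\ell=\mathbb{X}_\ell-\mathbf 1\bmu_\ell^\top$ and $H_\ell=(n_\ell^{-1}X^{0\top}_\ell X^0_\ell+\rho\I)^{-1}$. The anisotropic law gives $u^\top H_\ell v\approx u^\top Q_\ell v$, and $\widehat\Sigma_\ell$ differs from $n_\ell^{-1}X^{0\top}_\ell X^0_\ell$ by the rank-two term $\bmu_\ell r_\ell^\top+r_\ell\bmu_\ell^\top+\bmu_\ell\bmu_\ell^\top$. Woodbury turns this into $\mathcal K_\ell$ provided that:
\begin{itemize}
\item[(a)] $r_\ell^\top H_\ell r_\ell\to\gamma_\ell s/(1+\gamma_\ell s)$, with $s=s_{\gamma_\ell}(\rho)$;
\item[(b)] $v^\top H_\ell r_\ell\to0$ for bounded deterministic $v$.
\end{itemize}
These are the missing statements, and (a) is where $\gamma_\ell s$ enters the denominator.

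\textbf{How to close it.} The paper proves (a) and (b) in Lemmas~\ref{lemma:hn} and~\ref{lemma:hn_gn}. It uses leave-one-out on the diagonal of $\bar z^\top\Sigma^{1/2}H\Sigma^{1/2}\bar z$ and a leave-two-out/leave-four second-moment bound on its off-diagonal part, then passes to the centered resolvent via $T_n=L_n(\I-L_n)^{-1}$. Your linearization idea can deliver both facts more quickly. Set $\tilde G_\ell=(n_\ell^{-1}X^0_\ell X^{0\top}_\ell+\rho\I)^{-1}$ and note $r_\ell=n_\ell^{-1}X^{0\top}_\ell\mathbf 1$. The push-through identity gives $r_\ell^\top H_\ell r_\ell=1-\rho\,n_\ell^{-1}\mathbf 1^\top\tilde G_\ell\mathbf 1$ and $v^\top H_\ell r_\ell=n_\ell^{-1}v^\top X^{0\top}_\ell\tilde G_\ell\mathbf 1$. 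The Knowles--Yin law for the linearized resolvent sends these to $1-\rho\, m_{\gamma_\ell}(\rho)$ and $0$. The fixed-point equation for $m_\gamma$ shows that $1-\rho\, m_{\gamma_\ell}(\rho)=\gamma_\ell s/(1+\gamma_\ell s)$. Either way, this computation has to be in the proof for part~2 to be established.
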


In the proof, the first summand in each risk follows fairly directly from \cite{hastie2019surprises}, with extra care for the mean shifts. The cross terms require more work, especially in the fixed-effects case: there we use a careful leave-two-out technique to control bilinear forms of resolvents of the sum of sample covariance matrices of non-mean-zero random vectors, where the vectors entering the bilinear form themselves depend on the resolvent's entries. Specializing to the optimally weighted ensemble requires the asymptotic behavior of the optimal weights, which we describe next.

\begin{theorem}\label{thm:ensemble_optimal}
Assume \ref{assumptions} and let $\bbeta$ be random with $\E(\bbeta)=0$ and $\mathrm{Var}(\bbeta)=\tau^2\I/p$. For each $\ell=1,\ldots,K$, let $\lambda_\ell^\star=n\rho_\ell^\star$ denote the optimal tuning for $\hat{f}_\ell(\bx)=\bx^\top\hat{\bbeta}_{\lambda_\ell}$, and let $\mathbf{w}^\star$ denote the optimal convex weights for ensembling the optimally tuned $\hat{f}_\ell$. Define $A^\star_{\mathrm{RE}}\in\mathbb{R}^{K\times K}$ by
\[
A^\star_{\mathrm{RE}}(\ell,\ell)
=\tau^2(\rho_\ell^\star)^2[-a'_{\gamma_\ell}(\rho_\ell^\star)]
+\sigma_\varepsilon^2 V_{\gamma_\ell}(\rho_\ell^\star),
\quad
A^\star_{\mathrm{RE}}(\ell,\ell')
=\tau^2\rho_\ell^\star\rho_{\ell'}^\star\,
 \tfrac{1}{p}\tr(Q_\ell^\star Q_{\ell'}^\star),
\]
where $Q_\ell^\star=Q_{\gamma_\ell}(\rho_\ell^\star)$. Then:
\begin{enumerate}
\item The risk of $\sum_{\ell=1}^K w_\ell\hat{\bbeta}_{\lambda_\ell^\star}$ equals $\sigma_\star^2+\mathbf{w}^\top A^\star_{\mathrm{RE}}\mathbf{w}+o(1)$.

\item If some $\widetilde{\mathbf{w}}\in\arg\min_{\mathbf{w}\in\mathbb{R}^K: \mathbf{1}^\top\mathbf{w}=1}\mathbf{w}^\top A^\star_{\mathrm{RE}}\mathbf{w}$ satisfies $\widetilde{\mathbf{w}}\in\mathbb{R}_+^K$, the risk of $\sum_{\ell=1}^K w_\ell^\star\hat{\bbeta}_{\lambda_\ell^\star}$ equals $\sigma_\star^2+\bigl(\mathbf{1}^\top(A^\star_{\mathrm{RE}})^{-1}\mathbf{1}\bigr)^{-1}+o(1)$.
\end{enumerate}
\end{theorem}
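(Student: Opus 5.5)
Part 1 follows from Theorem~\ref{thm:gen_ensemble}, part~\ref{thm:gen_ensemble_re}, once the data-dependent choice of tuning is handled. The quantities $\rho_\ell^\star$ are defined through the asymptotic risk of the single-cluster predictor $\hat f_\ell$. Specializing Theorem~\ref{thm:gen_ensemble} to $\mathbf{w}=e_\ell$ shows that this risk is $\sigma_\star^2+\tau^2\rho^2[-a'_{\gamma_\ell}(\rho)]+\sigma_\varepsilon^2V_{\gamma_\ell}(\rho)+o(1)$. So we take $\rho_\ell^\star$ to be a minimizer of this deterministic function of $\rho$; it depends only on $(\gamma_\ell,\tau^2,\sigma_\varepsilon^2,F_{\Sigma,p})$ and not on the data. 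Plugging the fixed vector $\boldsymbol{\rho}^\star=(\rho_1^\star,\ldots,\rho_K^\star)$ into Theorem~\ref{thm:gen_ensemble}, part~\ref{thm:gen_ensemble_re}, and collecting terms gives
\[
\sum_\ell w_\ell^2A^\star_{\mathrm{RE}}(\ell,\ell)+2\sum_{\ell<j}w_\ell w_jA^\star_{\mathrm{RE}}(\ell,j)=\mathbf{w}^\top A^\star_{\mathrm{RE}}\mathbf{w},
\]
since $A^\star_{\mathrm{RE}}$ is symmetric. This is the claim of part~1.

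The one subtlety is whether the $o(1)$ is uniform in $\mathbf{w}$; this matters for part~2, because $\mathbf{w}^\star$ may be chosen from the data. The risk is an exact quadratic form in $\mathbf{w}$, $\sigma_\star^2+\mathbf{w}^\top\widehat A\,\mathbf{w}$, with
\[
\widehat A(\ell,j)=\E\bigl[\bx^{\star\top}(\hat\bbeta_{\lambda_\ell^\star}-\bbeta)(\hat\bbeta_{\lambda_j^\star}-\bbeta)^\top\bx^\star\bigr].
\]
Theorem~\ref{thm:gen_ensemble} applied at $\mathbf{w}=e_\ell$ and at $\mathbf{w}=(e_\ell+e_j)/2$ then gives $\widehat A(\ell,\ell)\to A^\star_{\mathrm{RE}}(\ell,\ell)$ and, by polarization, $\widehat A(\ell,j)\to A^\star_{\mathrm{RE}}(\ell,j)$. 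Because $K$ is fixed, $\|\widehat A-A^\star_{\mathrm{RE}}\|\to0$, and therefore $\sup_{\mathbf{w}\in\Delta_K}|\mathbf{w}^\top(\widehat A-A^\star_{\mathrm{RE}})\mathbf{w}|\to0$ over the simplex. This uniformity is what lets us interchange minimization over the simplex with taking limits.

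For part 2, note that the optimal convex risk is $\sigma_\star^2+\min_{\mathbf{w}\in\Delta_K}\mathbf{w}^\top\widehat A\mathbf{w}$. By uniform convergence this equals $\sigma_\star^2+\min_{\Delta_K}\mathbf{w}^\top A^\star_{\mathrm{RE}}\mathbf{w}+o(1)$, whichever minimizer $\mathbf{w}^\star$ is used. By hypothesis, some minimizer $\widetilde{\mathbf{w}}$ over the affine hyperplane $\{\mathbf{1}^\top\mathbf{w}=1\}$ lies in $\mathbb{R}_+^K$. Since $\Delta_K$ is contained in that hyperplane, the simplex minimum equals the hyperplane minimum. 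The hyperplane minimum is then obtained by Lagrange multipliers: $A\mathbf{w}=\nu\mathbf{1}$ gives $\mathbf{w}=A^{-1}\mathbf{1}/(\mathbf{1}^\top A^{-1}\mathbf{1})$, with value $(\mathbf{1}^\top A^{-1}\mathbf{1})^{-1}$.

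The main obstacle is that this step needs $A^\star_{\mathrm{RE}}$ to be positive definite. Positive semidefiniteness is immediate, since $A^\star_{\mathrm{RE}}$ is a limit of the Gram-type matrices $\widehat A$. For strict definiteness, the plan is to split $A^\star_{\mathrm{RE}}=\tau^2B+\sigma_\varepsilon^2D$, with $D=\mathrm{diag}(V_{\gamma_\ell}(\rho_\ell^\star))$ collecting the noise-variance terms; there are no noise cross terms because noises in different clusters are independent. Then it suffices to show $V_{\gamma_\ell}(\rho_\ell^\star)>0$ and $B\succeq0$.
\begin{itemize}
\item For $V>0$, use $V_\gamma(\rho)=\gamma\frac{d}{d\rho}[\rho a_\gamma(\rho)]$ together with the fact that $\rho a_\gamma(\rho)=\frac1p\tr(\I+m\Sigma)^{-1}$ is strictly increasing in $\rho$, because $m_\gamma$ is strictly decreasing in $\rho$.
\item For $B\succeq0$, note that $B$ is itself the limit of the bias Gram matrices $\E[(\hat\bbeta^{\rm bias}_\ell-\bbeta)^\top(\hat\bbeta^{\rm bias}_j-\bbeta)]$.
\end{itemize}
Positive definiteness is needed in the limit; if the $\gamma_\ell,\rho^\star_\ell$ depend on $p$ through $F_{\Sigma,p}$, it is needed uniformly. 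This follows from $V_{\gamma}(\rho)$ being bounded below, which the bounds $\eta\le\lambda(\Sigma)\le\eta^{-1}$ give provided $\rho^\star_\ell$ stays in a compact subset of $(0,\infty)$. That in turn holds because $\rho^\star_\ell$ has the explicit form $\sigma_\varepsilon^2\gamma_\ell/\tau^2$ as in \citet{dobriban2018high}, via the Hastie et al.\ optimality computation. With positive definiteness in hand, the inverse $(A^\star_{\mathrm{RE}})^{-1}$ is well defined and the formula in part~2 holds.
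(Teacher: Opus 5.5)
Your proposal is correct and takes the same route as the paper. Part 1 comes from plugging the deterministic optimal tuning $\rho_\ell^\star=\gamma_\ell\sigma_\varepsilon^2/\tau^2$ into Theorem~\ref{thm:gen_ensemble}, and part 2 from Lagrange multipliers on $\{\mathbf{1}^\top\mathbf{w}=1\}$ together with the nonnegativity hypothesis; you also supply two details the paper's sketch omits: uniformity of the $o(1)$ over the simplex via polarization (which covers a data-dependent $\mathbf{w}^\star$), and positive definiteness of $A^\star_{\mathrm{RE}}$ via $V_{\gamma_\ell}(\rho_\ell^\star)>0$ and the PSD bias Gram limit, which is what makes $(A^\star_{\mathrm{RE}})^{-1}$ well defined.
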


We state the optimally weighted, optimally tuned ensemble result only for the random-effects setting because this case yields a compact and interpretable expression. The same strategy can be applied in the fixed-effects setting, but the resulting optimal weights depend on the configuration of the mean shifts $\bmu_1,\ldots,\bmu_K$ and on their relationship with the true coefficient vector $\bbeta$. These relationships are challenging to estimate from data in the proportional asymptotic regime, which prevents a clean general comparison between pooled and ensemble learners under fixed effects. The same strategy as in Theorem~\ref{thm:ensemble_optimal} can nonetheless be applied to fixed-effects optimal ensembling, which we leave for future work.

\subsection{Analysis of Pooled Ridge Regression}\label{sec:pooled}
We split by the assumption on $\boldsymbol{\beta}$, treating the random- and fixed-effects cases in turn. The next theorem collects both.

\begin{theorem}\label{thm:Pooled}
Assume Assumption~\ref{assumptions} and $n_\ell/n\to\pi_\ell\in(0,1)$.
\begin{enumerate}
\item\label{thm:Pooled_re} Let $\bbeta$ be random with $\E(\bbeta)=0$ and $\mathrm{Var}(\bbeta)=\tau^2\I/p$.  Then
\begin{align*}
R_{\mathrm{pool},\rho_0}
&=\sigma_\star^2
+\tau^2\rho_0^2[-a_{\gamma_0}'(\rho_0)]
+\sigma_\varepsilon^2 V_{\gamma_0}(\rho_0)+o(1),\\
R_{\mathrm{pool},+}
&=\lim_{\rho_0\downarrow 0}R_{\mathrm{pool},\rho_0}+o(1).
\end{align*}

\item\label{thm:Pooled_fe} If $\limsup\|\bbeta\|<\infty$, set $U_0=[\sqrt{\pi_1}\bmu_1,\ldots,\sqrt{\pi_K}\bmu_K]$, $\mathcal K_0(\rho)=\mathcal K_{\gamma_0,U_0}(\rho)$, and $\psi_0(\rho)=\bbeta^\top\mathcal K_0(\rho)\bbeta$. Then
\begin{align*}
R_{\mathrm{pool},\rho_0}
&=\sigma_\star^2
+\tau^2\rho_0^2[-\psi_0'(\rho_0)]
+\sigma_\varepsilon^2 V_{\gamma_0}(\rho_0)+o(1),\\
R_{\mathrm{pool},+}
&=\lim_{\rho_0\downarrow 0}R_{\mathrm{pool},\rho_0}+o(1).
\end{align*}
\end{enumerate}
\end{theorem}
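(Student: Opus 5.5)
The plan is to reduce the pooled risk to a bias--variance decomposition and to handle the mean shifts as a finite-rank perturbation of a centered design.

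\textbf{Decomposition.} Because $\bx^\star$ is centered and isotropic, conditional on the design the risk is
\[
R_{\mathrm{pool},\rho_0}=\sigma_\star^2+\|\widehat\bbeta_{\lambda_0}-\bbeta\|^2 .
\]
Write $\widehat\Sigma_0=\Xm^\top\Xm/n$ and $R_0=(\widehat\Sigma_0+\rho_0\I)^{-1}$. The estimation error splits as
\[
\widehat\bbeta_{\lambda_0}-\bbeta=-\rho_0R_0\bbeta+R_0\Xm^\top\boldsymbol\varepsilon/n .
\]
Averaging over $\boldsymbol\varepsilon$ kills the cross term. This leaves a bias term $\rho_0^2\bbeta^\top R_0^2\bbeta$ and a variance term $\sigma_\varepsilon^2 n^{-1}\tr(R_0^2\widehat\Sigma_0)$. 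Using $R_0^2\widehat\Sigma_0=R_0-\rho_0R_0^2$, the variance term equals
\[
\sigma_\varepsilon^2\gamma_0\bigl[\tfrac1p\tr R_0+\rho_0\,\partial_{\rho_0}\tfrac1p\tr R_0\bigr].
\]
Similarly, $R_0^2=-\partial_{\rho_0}R_0$, so the bias term equals $\rho_0^2[-\partial_{\rho_0}\bbeta^\top R_0\bbeta]$. Both terms are therefore derivatives of resolvent functionals. It suffices to obtain deterministic equivalents of $\tfrac1p\tr R_0$ and $\bbeta^\top R_0\bbeta$ that hold uniformly on compact subsets of $\rho_0>0$. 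Uniform convergence of these analytic functions then passes to their derivatives by Vitali/Montel.

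\textbf{Finite-rank structure.} Write $\Xm=M+\mathbb Z\Sigma^{1/2}$, where $M$ stacks $\bmu_\ell^\top$ with $n_\ell$ repeats. Then $M$ has rank at most $K$ and $M^\top M/n\to U_0U_0^\top$ because $n_\ell/n\to\pi_\ell$. Expanding $\widehat\Sigma_0$ gives the centered part $\Sigma^{1/2}\mathbb Z^\top\mathbb Z\Sigma^{1/2}/n$, plus $M^\top M/n$, plus cross terms $M^\top\mathbb Z\Sigma^{1/2}/n$ of rank at most $K$ each. The full perturbation therefore has rank at most $2K$, and finite-rank perturbations change normalized traces by only $O(1/p)$. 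Hence $\tfrac1p\tr R_0\to a_{\gamma_0}(\rho_0)$ by the anisotropic local law \citep{knowles2017anisotropic}, with the usual Marchenko--Pastur-type equation for $m_{\gamma_0}$. This gives the variance $\sigma_\varepsilon^2V_{\gamma_0}(\rho_0)$ in both parts.

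\textbf{Bias, random effects.} In part~\ref{thm:Pooled_re}, the quadratic form concentrates: $\bbeta^\top R_0\bbeta=\tau^2\tfrac1p\tr R_0+o(1)$ by Hanson--Wright, independently of the design. This yields $\tau^2\rho_0^2[-a'_{\gamma_0}(\rho_0)]$.

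\textbf{Bias, fixed effects.} Part~\ref{thm:Pooled_fe} is the main obstacle, because the rank-$K$ shift does affect quadratic forms. Write $\Xm=[M\;\;\mathbb Z\Sigma^{1/2}]$ and view $M^\top=\sqrt n\,U_0E^\top$ with $E\in\mathbb R^{n\times K}$ having orthonormal columns, one normalized group-indicator per cluster. I would use the block (Schur) form of the resolvent. Set $G=(\Sigma^{1/2}\mathbb Z^\top P^\perp\mathbb Z\Sigma^{1/2}/n+\rho_0\I)^{-1}$, where $P^\perp$ projects off the $K$ columns of $E$. This is still a centered sample covariance with $n-K$ effective samples, so the anisotropic law gives $G\approx Q_{\gamma_0}(\rho_0)$ in bilinear forms.

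The remaining rank-$K$ contribution comes through $\sqrt n\,U_0+\Sigma^{1/2}\mathbb Z^\top E/\sqrt n$. Its random part $\boldsymbol g=\Sigma^{1/2}\mathbb Z^\top E/\sqrt n$ is approximately Gaussian with covariance $\Sigma\otimes\I_K$ and is nearly independent of $G$. Woodbury then gives
\[
R_0=G-GV\bigl(\I+V^\top GV\bigr)^{-1}V^\top G,\qquad V=U_0+\boldsymbol g/\sqrt n \text{ (after normalization).}
\]
Concentration replaces $\boldsymbol g^\top G\boldsymbol g$ by its trace, which contributes the self-energy shift $\gamma_0 s_{\gamma_0}(\rho_0)\I$, while $U_0^\top G\boldsymbol g\to0$. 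The result is $\bbeta^\top R_0\bbeta\to\bbeta^\top\mathcal K_{\gamma_0,U_0}(\rho_0)\bbeta$. The delicate points are the near-independence of $\boldsymbol g$ and $G$, handled by a leave-$K$-out argument, and the exact constant in front of $s$. I would first check the latter against the case $K=1$, $\Sigma=\I$ by a direct rank-one Sherman--Morrison computation.

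\textbf{Ridgeless limit.} For $R_{\mathrm{pool},+}$, I would argue as in \citet{hastie2022surprises}. The smallest nonzero eigenvalue of $\widehat\Sigma_0$ is bounded away from zero almost surely when $\gamma_0\neq1$, since the finite-rank shift does not move the bulk edge. This makes the $\rho_0\downarrow0$ limit uniform, so the limits can be interchanged.
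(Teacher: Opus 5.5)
Your architecture is the paper's. Projecting off the group-indicator directions $E$ gives exactly its decomposition $\widehat\Sigma_{\rm pool}=S_n^{(-)}+(U_n+R_n)(U_n+R_n)^\top$. Your $G$ is the within-cluster-centered resolvent $G_n^{(-)}(\rho)$, and $\Sigma^{1/2}\mathbb{Z}^\top E/\sqrt n=R_n$; since your $\boldsymbol g$ already carries the $1/\sqrt n$, one has $V=U_n+\boldsymbol g$. The same Woodbury step and the same limit $\{1+\gamma_0 s_{\gamma_0}(\rho)\}\I_K+U_0^\top Q_{\gamma_0}(\rho)U_0$ for $D_n(\rho)$ follow.

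The gap is the step you defer, which is exactly where the paper spends its effort. You need $R_n^\top G_n^{(-)}R_n\to\gamma_0 s_{\gamma_0}(\rho)\I_K$, $v^\top G_n^{(-)}R_n\to 0$, and $v^\top G_n^{(-)}v\to v^\top Q_{\gamma_0}(\rho)v$. You justify these by saying $G$ is a sample-covariance resolvent with $n-K$ effective samples and $\boldsymbol g$ is nearly independent of it by leave-$K$-out. That is exact only for Gaussian $\bZ_{i,\ell}$, where left rotational invariance of $\mathbb{Z}$ makes $E^\top\mathbb{Z}$ and $P^\perp\mathbb{Z}$ independent. Assumption (i) gives only i.i.d.\ subgaussian entries. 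Then each row of $E^\top\mathbb{Z}$ is a normalized sum over all rows of one cluster, so no set of rows can be left out to decouple it from $G$. Moreover, the rows of $P^\perp\mathbb{Z}$ are not independent, so the anisotropic law does not apply to $G$ directly. As stated, leave-$K$-out is not an available mechanism.

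The paper gets around this by working first with the uncentered-noise resolvent $H_n(\rho)=(S_z+\rho\I)^{-1}$, where $S_z=\frac1n\sum_{\ell,i}\Sigma^{1/2}\bZ_{i,\ell}\bZ_{i,\ell}^\top\Sigma^{1/2}$; here each row enters through a rank-one term. Write $s=s_{\gamma_0}(\rho)$ and expand $\bar z_\ell$. Leave-one-out handles the diagonal terms, and leave-two-out with a second-moment bound handles the off-diagonal ones. This gives $L_n:=R_n^\top H_nR_n\to\frac{\gamma_0 s}{1+\gamma_0 s}\I_K$ and $v^\top H_nR_n\to 0$. The denominator is a self-interaction that naive trace concentration applied to $H_n$ would miss; this is the source of the constant you were worried about.

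Since $S_z=S_n^{(-)}+R_nR_n^\top$, Woodbury gives $L_n=T_n(\I_K+T_n)^{-1}$ with $T_n=R_n^\top G_n^{(-)}R_n$. Hence $T_n=L_n(\I_K-L_n)^{-1}\to\gamma_0 s\,\I_K$ and $v^\top G_n^{(-)}R_n=v^\top H_nR_n(\I_K+T_n)\to 0$. The same identity transfers the bilinear-form law from $H_n$, where it does hold, to $G_n^{(-)}$. Alternatively, you could keep your route by proving the Gaussian case via rotation and adding a Lindeberg-type universality step, but that step must be supplied.

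Two smaller remarks. In the random-effects bias, Hanson--Wright would need independent subgaussian coordinates of $\bbeta$, which are not assumed. It is also unnecessary: the risk averages over $\bbeta$, so $\E_{\bbeta}[\bbeta^\top A\bbeta]=\frac{\tau^2}{p}\tr A$ exactly, with $A=(\widehat\Sigma_{\rm pool}+\rho_0\I)^{-2}$. Your Vitali--Montel argument for passing to $\rho$-derivatives is a sound and more explicit version of the paper's step of differentiating the deterministic equivalent.
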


As in the ensemble analysis, most of the proof effort goes into the fixed-effects case, where we again must control bilinear forms of resolvents of the sum of sample covariance matrices of non-mean-zero random vectors, with the vectors entering the form depending on the resolvent's entries. With these results, we can now compare the two methods.

\subsection{Comparison of Ensemble vs Pooled Learners}\label{sec:compare}
We compare the pooled and ensemble predictors via specific instances of Theorems~\ref{thm:gen_ensemble} and~\ref{thm:Pooled}. Setting $K=2$, $\Sigma=\I$, and random effects on $\boldsymbol{\beta}$ admits a clean comparison and lets us quantify the precise benefit of the pooled learner over the optimally tuned, optimally weighted ensemble.

\begin{corollary}\label{cor:comp_k2_iso_re}
Assume \ref{assumptions} with $\Sigma=\I$ and let $\bbeta$ be random with $\E(\bbeta)=0$ and $\mathrm{Var}(\bbeta)=\tau^2\I/p$. T. Set $\gamma_0=p/n$, $\gamma_\ell=p/n_\ell$ for $\ell=1,\ldots,K$, $\alpha=\sigma_\varepsilon^2/\tau^2$, $\rho_\ell^\star=\gamma_\ell\alpha$ for $\ell\ge 0$, and
\[
\kappa_\ell:=\kappa(\gamma_\ell)
=\frac{-(\alpha-1+\gamma_\ell^{-1})+\sqrt{(\alpha-1+\gamma_\ell^{-1})^2+4\alpha}}{2},
\qquad \ell\ge 0.
\]
With $\boldsymbol{\kappa}=(\kappa_1,\ldots,\kappa_K)^\top$, set $B=\mathrm{diag}(\kappa_1(1-\kappa_1),\ldots,\kappa_K(1-\kappa_K))+\boldsymbol{\kappa}\boldsymbol{\kappa}^\top$. Then:
\begin{enumerate}
\item The optimally tuned pooled estimator is asymptotically no worse than the optimally tuned, optimally weighted ensemble if and only if

\[
    \kappa_0
    \le
    \min_{\mathbf{w}=(w_1,\ldots,w_k)\in\mathbb{R}_+^K:\sum\limits_{\ell=1}^Kw_{\ell}=1} \mathbf{w}^\top B\mathbf{w}.
\]

\item Suppose $K=2$, and let $R_{\rm ens}^\star$ and $R_{\rm pool}^\star$ denote the mean squared prediction errors of the optimally weighted, optimally tuned ensemble and of the optimally tuned pooled regression. Then
\[
\frac{R_{\rm ens}^\star-\sigma_\star^2}
     {R_{\rm pool}^\star-\sigma_\star^2}
=\frac{\kappa_1\kappa_2(1-\kappa_1\kappa_2)}
      {\kappa_0(\kappa_1+\kappa_2-2\kappa_1\kappa_2)}+o_\P(1)>1+o_{\P}(1).
\]

\item Suppose $K=2$ and $n_1=n_2$, and let $\gamma=p/n$ where $n=n_1+n_2=2n_1$. Let $R_{\rm ens,+}$ and $R_{\rm pool,+}$ denote the mean squared prediction errors of the optimally weighted ridgeless ensemble and of pooled ridgeless least squares. Then:
\begin{enumerate}
\item If $\gamma<\tfrac{1}{2}$, $\dfrac{R_{\rm ens,+}}{R_{\rm pool,+}}=\dfrac{1-\gamma}{1-2\gamma}+o(1)>1+o_\P(1)$.

\item If $\gamma>1$, $\dfrac{R_{\rm ens,+}}{R_{\rm pool,+}}>1+o_\P(1)$ if and only if
$\dfrac{\sigma_\varepsilon^2}{\tau^2}<\dfrac{(2\gamma+1)(2\gamma-1)(\gamma-1)}{4\gamma^2(3\gamma-1)}.$
\end{enumerate}
\end{enumerate}
\end{corollary}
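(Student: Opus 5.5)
The plan is to specialize Theorems~\ref{thm:gen_ensemble}, \ref{thm:ensemble_optimal} and~\ref{thm:Pooled} to $\Sigma=\I$. In that case every matrix in sight is a scalar multiple of $\I$.

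\textbf{Closed form for $\rho a$.} With $\Sigma=\I$ we have $Q_\gamma(\rho)=a_\gamma(\rho)\I$, where $a_\gamma(\rho)=1/\{\rho(1+m_\gamma(\rho))\}$ and $m_\gamma$ solves the Marchenko--Pastur quadratic $1/m=\rho+\gamma/(1+m)$. I will substitute $\rho=\rho^\star=\gamma\alpha$, which is the known optimal ridge tuning for isotropic random effects. The key algebraic claim to verify is that $\rho^\star a_\gamma(\rho^\star)=\kappa(\gamma)$. This means checking that $\kappa$ is the positive root of $\kappa^2+(\alpha-1+\gamma^{-1})\kappa-\alpha=0$. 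That check amounts to rewriting the MP quadratic in the variable $x=\rho a=1/(1+m)$.

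\textbf{Excess risks in terms of $\kappa$.} At the optimal tuning, the derivative terms should collapse. Using $\partial_\rho$ of the fixed-point equation and the first-order optimality condition, I expect
\[
\tau^2\rho^{\star2}[-a'(\rho^\star)]+\sigma_\varepsilon^2V(\rho^\star)=\tau^2\kappa(\gamma).
\]
This is the classical identity that the optimal ridge excess risk equals $\sigma_\varepsilon^2\gamma\, a(\rho^\star)=\tau^2\rho^\star a(\rho^\star)$. It gives $A^\star_{\mathrm{RE}}(\ell,\ell)=\tau^2\kappa_\ell$. For the off-diagonal entries, $\tfrac1p\tr(Q^\star_\ell Q^\star_j)=a_\ell a_j$, so $A^\star_{\mathrm{RE}}(\ell,j)=\tau^2\kappa_\ell\kappa_j$. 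Hence $A^\star_{\mathrm{RE}}=\tau^2B$. By Theorem~\ref{thm:Pooled}, the optimally tuned pooled excess risk is $\tau^2\kappa_0$.

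\textbf{Part 1.} This follows immediately from part 1 of Theorem~\ref{thm:ensemble_optimal}, by minimizing $\mathbf w^\top B\mathbf w$ over the simplex.

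\textbf{Part 2 ($K=2$).} I will first minimize over the affine line $\mathbf 1^\top \mathbf w=1$ and check that the minimizer has nonnegative weights. This holds because $0<\kappa_\ell<1$ and the minimizer is proportional to $B^{-1}\mathbf 1$. Part 2 of Theorem~\ref{thm:ensemble_optimal} then gives the optimal ensemble excess risk as $\tau^2(\mathbf 1^\top B^{-1}\mathbf 1)^{-1}$. A $2\times2$ inversion yields
\[
\tau^2\,\frac{\kappa_1\kappa_2(1-\kappa_1\kappa_2)}{\kappa_1+\kappa_2-2\kappa_1\kappa_2},
\]
and dividing by $\tau^2\kappa_0$ gives the stated ratio.

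The strict inequality is the main obstacle. Here $\gamma_0^{-1}=\gamma_1^{-1}+\gamma_2^{-1}$, so I need the function $u\mapsto\kappa$, with $u=\gamma^{-1}$, to satisfy the "parallel combination" inequality
\[
\frac{\kappa(u_1)\kappa(u_2)(1-\kappa(u_1)\kappa(u_2))}{\kappa(u_1)+\kappa(u_2)-2\kappa(u_1)\kappa(u_2)}>\kappa(u_1+u_2).
\]
The first approach I will try is to work with $g=\kappa/(1-\kappa)$ or with $1/\kappa$. From the quadratic, $u=(\alpha-\kappa^2)/\kappa+1-\alpha$. I will express $u_1+u_2$ in terms of $\kappa_1,\kappa_2$ and use that $\kappa$ is decreasing in $u$. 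This reduces the claim to a polynomial inequality in $(\kappa_1,\kappa_2,\alpha)$, which I expect to factor as a positive multiple of $(1-\kappa_1)(1-\kappa_2)\,\times$ something positive.

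\textbf{Part 3 (ridgeless, $n_1=n_2$).} Here $\gamma_1=\gamma_2=2\gamma$. Since $B$ is convex and symmetric in the two clusters, the optimal weights are $(1/2,1/2)$. I will take the limits $\rho\downarrow0$ in Theorems~\ref{thm:gen_ensemble} and~\ref{thm:Pooled}.
\begin{itemize}
\item[(a)] When $2\gamma<1$, the bias and cross terms vanish because $\rho a\to0$, and $V\to\gamma_\ell/(1-\gamma_\ell)$. The ensemble excess is $\tfrac12\sigma_\varepsilon^2\cdot2\gamma/(1-2\gamma)$ and the pooled excess is $\sigma_\varepsilon^2\gamma/(1-\gamma)$. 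Their ratio is $(1-\gamma)/(1-2\gamma)$, where the ratio is understood on excess risks.
\item[(b)] When $\gamma>1$, we have $\rho a\to1-1/\gamma_\ell$ and $V\to1/(\gamma_\ell-1)$. The ensemble excess is $\tfrac12[\tau^2(1-\tfrac1{2\gamma})+\tfrac{\sigma_\varepsilon^2}{2\gamma-1}]+\tfrac12\tau^2(1-\tfrac1{2\gamma})^2$. The pooled excess is $\tau^2(1-\tfrac1\gamma)+\tfrac{\sigma_\varepsilon^2}{\gamma-1}$. Comparing the two and solving the resulting linear inequality in $\alpha$ gives the stated threshold.
\end{itemize}
Justifying the interchange of $\rho\downarrow0$ with the $o(1)$ terms is already part of those theorems.
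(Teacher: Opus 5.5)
Your proposal is correct and follows the paper's route. The paper proves the strict inequality in Part 2 exactly as you plan: it uses $h(u)=\alpha/u-u+1-\alpha$, for which $h(\kappa(\gamma))=1/\gamma$, $h$ is decreasing, and $h(\kappa_0)=h(\kappa_1)+h(\kappa_2)$, and it factors $h(q)-h(\kappa_1)-h(\kappa_2)$ as $-(1-\kappa_1)(1-\kappa_2)$ times a positive quantity. The paper omits the remaining algebra ($\rho^\star a=\kappa$, $A^\star_{\mathrm{RE}}=\tau^2B$, and the ridgeless limits in Part 3), and you supply it correctly.

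One small slip: in Part 3 the equal weights $(1/2,1/2)$ come from convexity and cluster symmetry of the ridgeless limit matrix, not of $B$. For $\gamma>1$ that matrix has diagonal $\tau^2(1-\tfrac{1}{2\gamma})+\sigma_\varepsilon^2/(2\gamma-1)$ and off-diagonal $\tau^2(1-\tfrac{1}{2\gamma})^2$; the same argument applies to it unchanged.
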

%Appendix~\ref{app:ridgeless-validation} provides a validation figure for the ridgeless risk ratio in Corollary~\ref{cor:comp_k2_iso_re}\,(3a), showing agreement between the theoretical limit and simulations in the under-parametrized regime.
%\subsection{Additional ridgeless validation}
%\label{app:ridgeless-validation}
The Corollary above verifies that for random effects, the pooled learners perform better than ensembles -- both for optimally tuned learners and the ridgeless case. Figure~\ref{fig:F3} provides an additional validation of the limit in Corollary~\ref{cor:comp_k2_iso_re} part (3c) in the ridgeless setting. We consider the case $K=2$ with equal cluster sizes, as in the theoretical setup, and vary the aspect ratio $\gamma=p/n$ in the regime where both the pooled learner and the per-cluster base learners in the ensemble are underparameterized ($0 < \gamma < .5$). We find that the simulated MSE ratio closely follows the theoretical limit from Corollary~\ref{cor:comp_k2_iso_re} (3c).

\begin{SCfigure}[50][ht]
    \centering
    \includegraphics[width=0.6\textwidth]{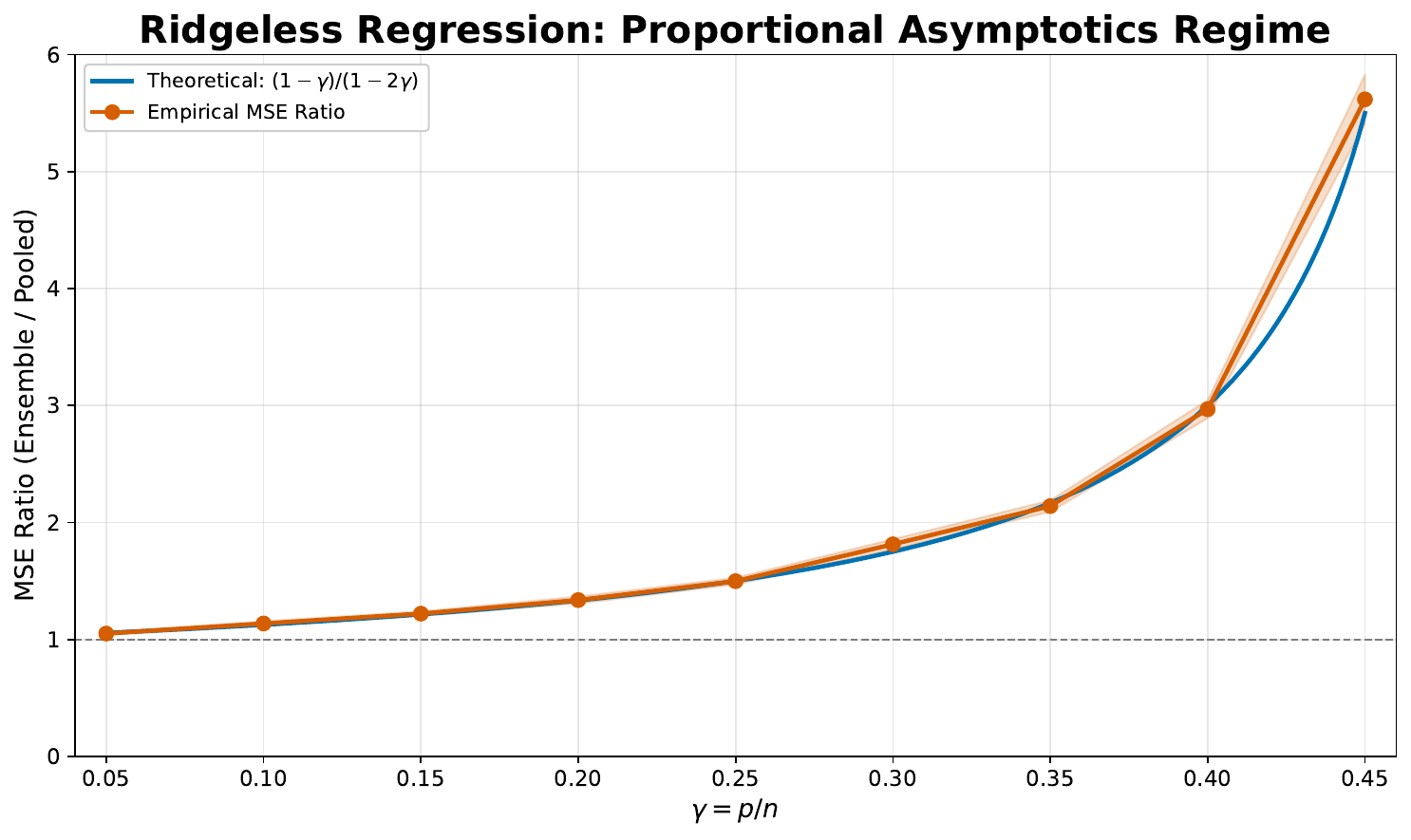}
    \caption{Ratio of \emph{Ensemble} MSE to \emph{Pooled} MSE for ridgeless learners with $K = 2$; total training sample size $n= 1000$, with $n_1 = n_2 = 500$, and $\gamma = p/n$ in the range $[0, p/n_1]$ so that both methods are underparameterized. For each cluster $\ell$, covariates are drawn from a $\mathbb{N}_p(\mu_{\ell}, \mathbb{I})$, with $\mu_1 = - \mu_2$. The theoretical limit from Corollary \ref{cor:comp_k2_iso_re} (3a) is shown in red, matching our simulated results across all values of $\gamma$.}
    \label{fig:F3}
\end{SCfigure}

\section{Simulations}

We conduct a comprehensive simulation study to validate our theoretical results. We describe the common setup, then present results for ridge and ridgeless estimators with a particular focus on the effects of cluster heterogeneity. All experiments were run using 4 CPU cores on a local machine.

\subsection{Simulation Setup}
\label{sec:sim-setup}

We simulate $n = 2000$ total training observations partitioned into $K = 2$ equal-sized clusters with $n_1 = n_2 = 1000$. Within each cluster $\ell \in \{1, 2\}$, the covariates are independent draws from a multivariate normal distribution with cluster-specific mean and identity covariance: $\mathbb{X}_{\ell} \sim \mathbb{N}_p(\mu_{\ell}, \mathbb{I})$ where $\mu_{\ell} \in \mathbb{R}^p$, 
so that all covariates are uncorrelated within and across clusters and have unit marginal variance. The first cluster's mean $\mu_1$ is a unit-norm random vector drawn from $\mathbb{N}_p(0, \mathbb{I})$ and rescaled so that $\|\mu_1\| = 1$; the second cluster's mean $\mu_2$ is constructed via Gram-Schmidt orthogonalization to have a prescribed angle with $\mu_1$ and a prescribed norm (the specific values used are described in Section~\ref{sec:sim-hetero}). The means $\mu_1$ and $\mu_2$ are redrawn independently in every repetition.

The coefficient vector $\boldsymbol{\beta} \in \mathbb{R}^p$ used to generate the outcome is drawn from $\mathrm{N}_{p}(0,\mathbb{I})$ and normalized to unit norm ($\|\boldsymbol{\beta}\| = 1$); the same $\boldsymbol{\beta}$ is used to generate training and test outcomes within each repetition. For cluster $\ell \in \{1, 2\}$ and samples $i=1,\ldots,n_\ell$, training data is generated as $\mathbb{X}_{\ell} \sim \mathrm{N}_{p}(\mu_{\ell}, \mathbb{I})$ with response $Y_{\ell}=\boldsymbol{\beta}^{\top}\bX_{\ell}+\varepsilon_{\ell}$, where $\varepsilon_{\ell} \sim \mathrm{N}_{n_\ell}(0, \sigma^2\mathbb{I})$ and $\sigma = 1/\sqrt{5}$, giving a signal-to-noise ratio $\mathrm{SNR} = \|\boldsymbol{\beta}\|^2 / \sigma^2 = 5$ (matching the highest SNR considered by \citep{hastie2022surprises}). 

The test data $\bX^{\star} \in \mathbb{R}^{n_{\text{test}} \times p}$ is drawn from a standard multivariate normal; $\bX^{\star} \sim \mathrm{N}_{p}(0, \mathbb{I})$, with $n_{\text{test}} = 1000$. The outcome is simulated with no noise, $Y^{\star} = \bX^{\star} \boldsymbol{\beta}$, isolating the prediction risk of each estimator on a fixed reference distribution and matching the theoretical setup of \citep{hastie2022surprises}. The covariate dimension is varied across $p \in \{250, 500, 750, 1050, 1500, 2050, 3000, 4000, 5000, 6000\}$, spanning the underparameterized regime, the cluster interpolation threshold at $p = n/K = 1000$, the Pooled interpolation threshold at $p = n = 2000$, and the deep overparameterized regime. We avoid values of $p$ that exactly coincide with these thresholds to prevent numerical instability at the singular boundary. All results are averaged over 100 independent repetitions, with 95\% confidence intervals computed from the $t$-distribution.

\paragraph{Base Learners}
We consider ridge and ridgeless base learners as defined in Section \ref{sec:setup}. We compute the ridgeless solution numerically using the SVD-based routine \texttt{numpy.linalg.lstsq} with the default rank-cutoff tolerance, which is mathematically equivalent to the pseudoinverse solution but more numerically stable. For the ridge solution, the regularization parameter $\lambda$ is selected by leave-one-out cross-validation over a logarithmic grid of 20 values spanning $[10^{-6}, 10^{6}]$ using the \texttt{RidgeCV} implementation in \texttt{scikit-learn} \citep{pedregosa2011scikit}; $\lambda$ is tuned independently for every learner.

\subsection{Parameter Configurations}
\label{sec:sim-hetero}

To assess how cluster heterogeneity modulates the ensemble--Pooled comparison, we vary two geometric parameters of the cluster means while holding the rest of the setup fixed:
\begin{itemize}
    \item \textbf{Fixed angle, varying norm.} We fix the angle between $\mu_1$ and $\mu_2$ at $60^\circ$ and set $\|\mu_1\| = 1$, $\|\mu_2\| \in \{1, 5, 10\}$. This sweeps the relative signal strength of the two clusters from balanced to highly asymmetric.
    \item \textbf{Fixed norm, varying angle.} We fix $\|\mu_1\| = \|\mu_2\| = 1$ and set the angle between them to $\{30^\circ, 60^\circ, 90^\circ\}$. This sweeps the directional divergence between the two clusters from highly aligned to orthogonal.
\end{itemize}

We evaluate each parameter configuration for all 10 values of $p$ over 100 repetitions; we plot the MSE ratio of the ensemble to the pooled learner in Figure \ref{fig:F1} (a) and (b), and the corresponding raw MSE values for each method in Figure \ref{fig:F1} (c) and (d). We performed additional experiments varying the signal-to-noise in the range $\{1, 10\}$, as well as considering fixed, non-zero $\boldsymbol{\beta}$ vectors for generating the outcome, and found that the results did not change; for simplicity, we show only the results corresponding to the specific parameter configurations discussed above.

\begin{figure}[t!]
  \centering
  \includegraphics[width=\linewidth]{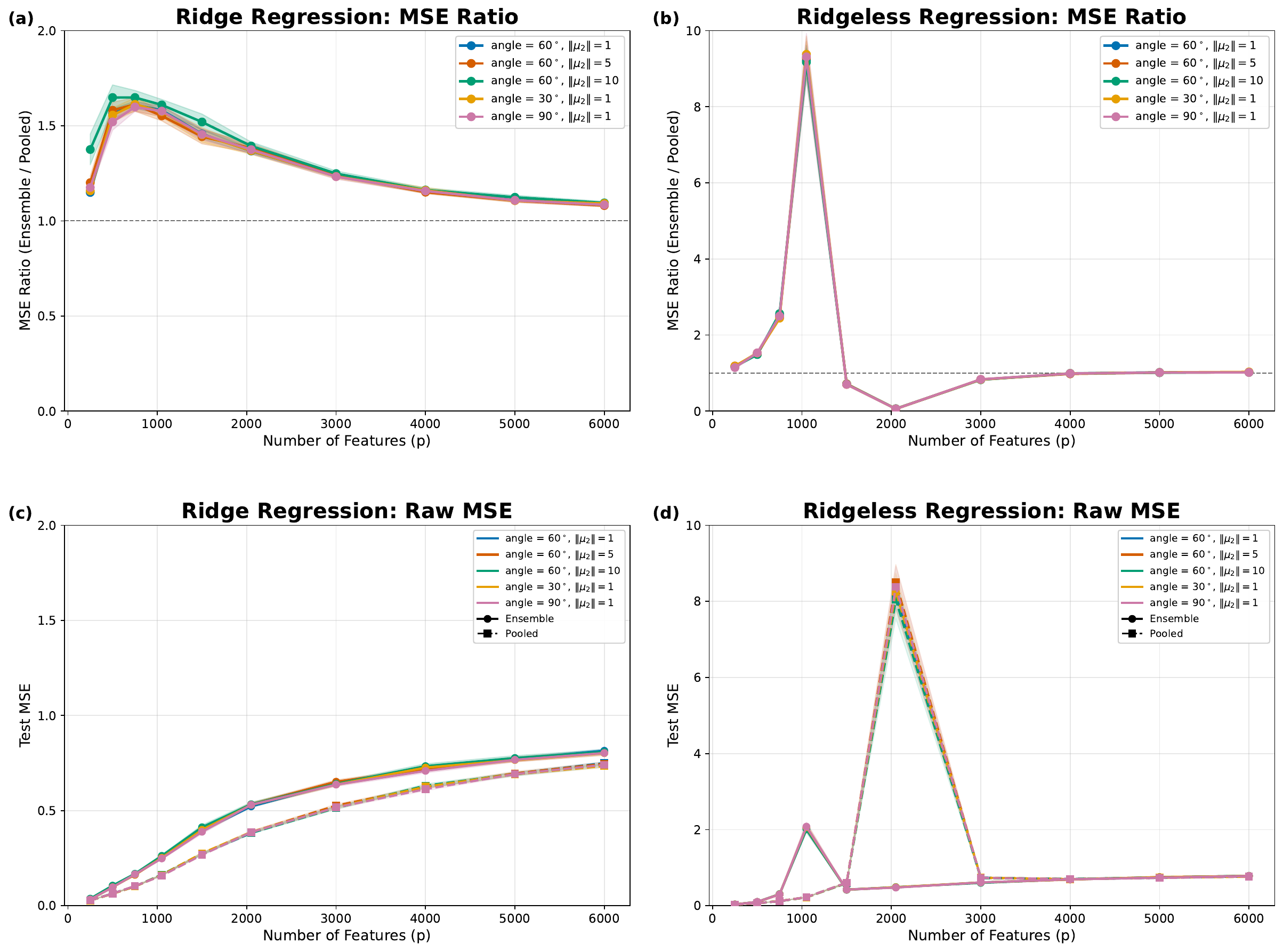}
  \caption{MSE ratio (\emph{Ensemble} / \emph{Pooled}) in panels (a, b) and raw ensemble and pooled MSEs in panels (c, d); ridge regression in (a, c) and ridgeless regression in (b, d). Training data are drawn from two multivariate normal clusters with means $\mu_1, \mu_2$ and identity covariance under a linear outcome model, with $n_1 = n_2 = 1000$ ($n = 2000$); test points are $n_{\text{test}} = 1000$ draws from a standard multivariate normal. Mean results and 95\% CIs are plotted over 100 replicates per configuration. Each panel overlays two scenarios: (i)~angle between $\mu_1, \mu_2$ fixed at $60^{\circ}$, $\lVert\mu_1\rVert = 1$, varying $\lVert\mu_2\rVert$; (ii)~$\lVert\mu_1\rVert = \lVert\mu_2\rVert = 1$, varying angle. For ridge learners, \textit{Pooled} uniformly outperforms \textit{Ensemble}. For ridgeless learners, behavior is governed by two interpolation peaks (\textit{Ensemble} at $p \approx n_1$, \textit{Pooled} at $p \approx n$); away from these peaks, the two are nearly equivalent. Varying the angle or norms has a negligible effect throughout.}
  \label{fig:F1}
\end{figure}

\section{Discussion}
This paper presents a precise asymptotic comparison between pooled and ensemble learners fit with ridge-regularized linear regression in a proportional asymptotic regime. Our results demonstrate the benefits of pooled methods in random-effects settings and quantify the precise efficiency gains over ensemble learners.

The simulations complement the theory and additionally capture behavior near the interpolation thresholds for ridgeless regression. For the ridgeless estimator, the MSE ratio shows distinct patterns across three regimes. When both the ensemble base learners and the pooled learner are underparameterized ($p<n_\ell$), the advantage of the pooled approach grows with $p$, vanishing only in the fixed-dimensional limit $p\ll n_\ell$ where the two methods perform comparably. At the ensemble's interpolation threshold ($p\approx n_\ell$), its MSE spikes and drives the ratio sharply up. As $p$ moves into the intermediate regime $n_\ell<p<n$, the ratio dips slightly below $1$ and continues to drop as $p$ approaches the pooled learner's interpolation threshold ($p\approx n$); at this point each ensemble base learner is already overparameterized, and the ensemble enjoys its largest advantage; this complements results from \cite{patil2023bagging}. Notably, the MSE spike of the ensemble is roughly a quarter the size of that for the pooled approach, indicating that the former's worst-case performance is considerably better than the latter's. Once both methods are overparameterized ($p>n$), the ratio approaches $1$ from below as both MSEs converge to the null risk. The picture differs sharply for ridge regression with $\lambda$ tuned optimally per learner: no regime favors the ensemble. The MSE ratio stays between $1.06$ and $1.62$ across all $p$, and the raw MSEs are smooth and monotone in $p$, with no spikes at either interpolation threshold. As in the ridgeless case, the ratio approaches $1$ as $p\gg n$, with both methods converging to the null risk.

Variations in cluster structure have essentially no effect on the comparison. Across all four panels of Figure~\ref{fig:F1}, the curves for the different configurations overlap so tightly that the differences sit below the $95\%$ confidence intervals at almost every $p$. To probe whether this depends on partitioning along the true cluster structure, we ran an ablation in which data were generated under the two-cluster framework of Section~\ref{sec:sim-setup} but randomly split into two halves to train the ensemble base learners. For both ridge and ridgeless regression, the results were identical to Figure~\ref{fig:F1}, even though the random partitions bore no relation to the true clusters. We further considered a fully non-clustered regime aligned with \cite{patil2023bagging}, in which both training and test data were drawn from a $p$-dimensional standard multivariate normal. Splitting the training data into two random equal halves for the ensemble, with the pooled learner trained on the full set, gave the same raw MSEs, and therefore the same MSE ratio, as the clustered experiments. The salient feature of the partition is not its alignment with any cluster structure but simply that it splits the data into two equal halves so that $2\times p/n_\ell=p/n$. The dominant axis of variation is the aspect ratio; the geometry of the cluster means, and indeed the very presence of covariate heterogeneity, is by comparison irrelevant.

\paragraph{Limitations and future work.}
Our analysis focuses on well-specified linear models with shared coefficients and a shared noise variance across clusters. This is the appropriate setting for isolating pure covariate heterogeneity, but it does not cover model misspecification, nonlinear predictors, or clusters with different noise levels or signal-to-noise ratios. Several extensions are natural. The random matrix arguments developed here extend to joint shifts in mean and variance, beyond the mean shifts we consider. The case of estimated rather than known cluster memberships is open, as is the move beyond linear regression, which can follow the template of \cite{hastie2019surprises} where results were first established under linear models before being extended to general nonlinear settings. Theoretical analysis of the ridgeless estimator near the interpolation peaks under covariate shift is an additional future direction. Finally, when cluster effects are fixed rather than random, the optimal choice between pooled and ensemble strategies depends on unknown cluster parameters, making adaptive selection an interesting open question.

\newpage
\bibliographystyle{plainnat}
\bibliography{bibliography}

%%%%%%%%%%%%%%%%%%%%%%%%%%%%%%%%%%%%%%%%%%%%%%%%%%%%%%%%%%%%

\appendix

\section{Appendix}

\subsection{Further Related Work}
\label{app:related-work}

\paragraph{An unsupervised analog: stacking versus per-source SVD.}
\citet{baharav2025stackedsvd} study a closely related question in an unsupervised setting: given multiple datasets that share a common right singular subspace but differ in signal strength and noise level, they investigate whether it is better to stack all datasets and run a single SVD (\emph{stacked SVD}) or to run SVD on each dataset separately and aggregate the per-source estimates (\emph{SVD stacked}). Their main finding is that the (\emph{stacked SVD}) approach dominates when dataset-specific weights are chosen optimally (with optimal weights downweighting noisier or lower-signal sources), but that without optimal weighting, neither approach dominates universally and per-source aggregation can occasionally win. Their setting differs from ours in the source of heterogeneity: all of their datasets share the same isotropic noise structure and signal direction but differ in signal-to-noise ratio, whereas in our supervised setting all clusters share the same signal and the noise level but differ in their covariate distributions. Despite this difference, the two settings raise the same fundamental question: when multiple data sources differ in some structural way,  whether it is optimal to combine them before fitting or to fit separately and aggregate. 
\subsection{Background Results} Throughout we repeatedly use the following anisotropic local laws \citep{knowles2017anisotropic}.
\begin{proposition}[\citep{knowles2017anisotropic}]\label{prop:prop_anisotropic_local_law}
Consider $n$ i.i.d. vectors    $\bX_i = \Sigma^{1/2}\mathbf{Z}_i $ with $\mathbf{Z}_i=(Z_{ij})_{j=1}^p$ are iid from some distribution with mean $0$, variance $1$ and sub-gaussian tails. With $\hat{\Sigma}=\frac{1}{n}\sum\limits_{i=1}^n \bX_i\bX_i^\top$ and constants $\alpha_0,\alpha_1>0$ define the event
    $$
    \Omega_n(\alpha_0,\alpha_1) = \left\{ \left| v^\top (\hat \Sigma + \lambda \mathbb{I})^{-1}v - v^\top \left(\mathbb{I} + \tilde m_n(-\lambda) \Sigma\right)^{-1}v \right| \le \frac{1}{n^{\frac12 - \alpha_0}\lambda}\right\}
    $$
    for all $\lambda \ge n^{-\frac23 + \alpha_1}$.
    Here $\tilde m_n(z)$ is the Stieltjes transform of the companion of the Marchenko-Pasteur law, defined as the solution of the following fixed-point equation: 
    \begin{equation}
    \label{eq:comp_MP}
    \frac{1}{\tilde m_n(z)} = - z + \gamma \frac1p\sum_{j = 1}^p \frac{s_j(\Sigma)}{1 + s_j(\Sigma)\tilde m_n(z)} = -z + \gamma \int \frac{x}{1 + x \tilde m_n(z)} \ dF_{\Sigma, n}(x) \,,
    \end{equation}
    where $\{s_j(\Sigma)\}_{j \in [p]}$ are eigenvalues of $\Sigma$ and $\gamma=\lim \frac{p}{n}$. 
    %By anisotropic local law (Theorem 3.17 of \cite{knowles2017anisotropic}, see also proof of Theorem 5 of \cite{hastie2022surprises}), 
    Then given any large $D > 0$ and small $\alpha_0, \alpha_1 > 0$, there exists $N_0 \geq 1$, such that $\mathbb{P}(\Omega_n^c(\alpha_0,\alpha_1)) \le n^{-D}$ for $n \ge N_0$.
\end{proposition}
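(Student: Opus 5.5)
The proposition is a specialization of the anisotropic local law of \citet{knowles2017anisotropic} (their Theorems~3.16--3.17, as also used in the proof of Theorem~5 of \citet{hastie2022surprises}). I would therefore prove it by translating their statement into the present notation, not by redoing the random matrix analysis. Two caveats apply to the statement as written. First, the constant $\alpha_0$ is unused in the bound, and ``for all $\lambda$'' sits outside the event. I read the event as requiring the inequality simultaneously for all $\lambda\ge n^{-2/3+\alpha_1}$, with $\alpha_0>0$ arbitrary. Second, the resolvent normalisation needs fixing.

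\emph{Step 1: matching normalisations.} Write $G(z)=(\hat\Sigma-z)^{-1}$, so that $(\hat\Sigma+\lambda\mathbb{I})^{-1}=G(-\lambda)$. In Knowles--Yin, the deterministic equivalent of $G(z)$ is $\Pi(z)=-\{z(\mathbb{I}+\tilde m_n(z)\Sigma)\}^{-1}$, where $\tilde m_n$ solves \eqref{eq:comp_MP}. At $z=-\lambda$ this gives $\{\lambda(\mathbb{I}+\tilde m_n(-\lambda)\Sigma)\}^{-1}$. This is consistent with $Q_\gamma(\rho)$ in Section~\ref{sec:setup} (with $m_\gamma(\rho)=\tilde m_n(-\rho)$), and I would read the displayed comparison matrix with this factor $\lambda^{-1}$. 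The factor then cancels against the $1/\lambda$ in the error bound, leaving a relative error of order $n^{-1/2+\alpha_0}$. I would also record that $\tilde m_n(-\lambda)>0$ is real and smooth in $\lambda$. Together with $\eta\le\lambda_1(\Sigma)$, this keeps $\mathbb{I}+\tilde m_n\Sigma$ well conditioned.

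\emph{Step 2: pointwise bound.} For fixed $\lambda$, the point $z=-\lambda$ lies on the negative real axis, outside the support of the Marchenko--Pastur law. Here I would invoke the ``outside the spectrum'' version of the anisotropic law (their Theorem~3.16/3.17). It gives
\[
|v^\top(G(z)-\Pi(z))v|\le n^{\alpha_0}\|v\|^2\,\Psi(z)
\]
with probability at least $1-n^{-D}$, where $\Psi(z)$ is of order $n^{-1/2}\kappa(z)^{-1/4}$ and $\kappa$ is the distance to the spectral edge. Since $\Sigma$ has bounded spectrum and the edges are at distance at least $\lambda$ from $-\lambda$, this bound can be rewritten as $\|v\|^2/(n^{1/2-\alpha_0}\lambda)$. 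The factor $1/\lambda$ also absorbs the trivial bound $\|G(-\lambda)\|\le\lambda^{-1}$. When $\gamma\ge 1$ the hard edge sits at $0$, so I would verify that $\Psi$ still yields the stated rate for all $\lambda\ge n^{-2/3+\alpha_1}$. This threshold is exactly where the local law's control of the smallest eigenvalue fluctuations, at scale $n^{-2/3}$, kicks in.

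\emph{Step 3: uniformity in $\lambda$.} Both $\lambda\mapsto v^\top G(-\lambda)v$ and $\lambda\mapsto v^\top\Pi(-\lambda)v$ are Lipschitz, with constant at most $\lambda^{-2}\|v\|^2\le n^{4/3}\|v\|^2$ on the range considered. For $\lambda$ larger than a big constant $C$, both sides are $O(\|v\|^2/\lambda)$ and the difference is controlled directly. I would therefore take a grid of $n^{C'}$ points in $[n^{-2/3+\alpha_1},C]$ and apply Step~2 at each point. A union bound then costs a factor $n^{C'}$ in probability, which is absorbed by enlarging $D$. Lipschitz interpolation between grid points costs $o(n^{-1/2}/\lambda)$.

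I expect the main obstacle to be the small-$\lambda$ end near the hard edge when $\gamma\approx 1$ or $\gamma>1$. If the real-axis statement does not directly deliver the clean $1/(n^{1/2-\alpha_0}\lambda)$ form there, I would first pass to complex $z=-\lambda+i n^{-1}$, where the bulk local law applies. I would then return to the real axis via the Lipschitz bound in the imaginary direction, using the fact that no eigenvalue of $\hat\Sigma$ lies below $-\lambda/2$.
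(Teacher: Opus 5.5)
Your proposal takes the same route as the paper, which gives no argument of its own and simply imports the Knowles--Yin anisotropic local law in the form used by Hastie et al. Your reading of the comparison matrix as $\{\lambda(\mathbb{I}+\tilde m_n(-\lambda)\Sigma)\}^{-1}$ is the right repair of the display (it matches $Q_\gamma(\rho)$ and the way the paper later applies the result), though three details need correcting: $\alpha_0$ is in fact used (it sets the rate $n^{-1/2+\alpha_0}$); the $\lambda^{-1}$ in the error does not come from $\kappa^{-1/4}\le\lambda^{-1}$ (for $\gamma\neq 1$ the nonzero spectrum stays a constant distance from $-\lambda$) but from the factor $z^{-1}$ relating the Knowles--Yin linearized resolvent to $(\hat\Sigma-z)^{-1}$, a factor that cannot be removed when $\gamma>1$ because of the $p-n$ null directions of $\hat\Sigma$; and for $\lambda$ beyond a constant, knowing that both sides are $O(\|v\|^2/\lambda)$ does not give the required relative accuracy $n^{-1/2+\alpha_0}$, which you can obtain instead from the maximum principle applied to $\lambda\, v^\top\{(\hat\Sigma+\lambda\mathbb{I})^{-1}-\Pi(-\lambda)\}v$, an analytic function of $1/\lambda$ vanishing at $0$, using the local law on a bounded contour.
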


\subsection{Proof of Main Results} Throughout we will assume $\tau^2=\sigma_{\varepsilon}^2=1$. The general case follows verbatim through scaling.
\subsubsection{Proof of Theorem \ref{thm:Pooled}}

Recall the notation and definitions as follows.\\
$\hat{\Sigma}_{\ell}=\frac{1}{n_\ell}\mathbb{X}_{\ell}^\top\mathbb{X}_{\ell}$ and $\hat{\Sigma}_{\rm pool}= \frac{1}{n}\mathbb{X}_{\rm pool}^\top \mathbb{X}_{\rm pool}$ where $n=\sum\limits_{\ell=1}^Kn_{\ell}$. Then define ridge regression as:
\begin{align*}
    \hat{f}_{\ell}(\bx)&=\hat{\boldsymbol{\beta}}_{\lambda_{\ell}}^{\top}\bx,\quad \hat{\boldsymbol{\beta}}_{\lambda_{\ell}}=(\mathbb{X}_{\ell}^\top\mathbb{X}_{\ell}+\lambda_{\ell}\mathbb{I})^{-1}\mathbb{X}_{\ell}^{\top}\mathbf{Y}_{\ell},\quad \lambda_{\ell}>0;\\
    \hat{f}_{\rm pool}(\bx)&=\hat{\boldsymbol{\beta}}_{\lambda}^{\top}\bx,\quad \hat{\boldsymbol{\beta}}_{\lambda}=(\mathbb{X}_{\rm pool}^\top\mathbb{X}_{\rm pool}+\lambda\mathbb{I})^{-1}\mathbb{X}_{\rm pool}^{\top}\mathbf{Y}_{\rm pool},\quad \lambda>0.
\end{align*}
Define ridge-less regression as:
\begin{align*}
    \hat{f}_{\ell,+}(\bx)&=\hat{\boldsymbol{\beta}}_{+,\ell}^{\top}\bx,\quad \hat{\boldsymbol{\beta}}_{+,\ell}=(\mathbb{X}_{\ell}^\top\mathbb{X}_{\ell})^{+}\mathbb{X}_{\ell}^{\top}\mathbf{Y}_{\ell};\\
    \hat{f}_{\rm pool, +}(\bx)&=\hat{\boldsymbol{\beta}}_{+}^{\top}\bx,\quad \hat{\boldsymbol{\beta}}_{+}=(\mathbb{X}_{\rm pool}^\top\mathbb{X}_{\rm pool})^{+}\mathbb{X}_{\rm pool}^{\top}\mathbf{Y}_{\rm pool}.
\end{align*}
We first analyze $ \hat{f}_{\rm pool}(\bx)$. We divide the analysis in two cases and sub-cases. The first case pertains to the random effects analysis -- further divided into fixed $\rho$ analysis followed by the ridgeless. We subsequently follow the same template for the fixed effects analysis i.e. fixed $\rho$ analysis followed by the ridgeless. Each of the analysis follows from an initial template decomposition of the risk as follows. Specifically,  for any estimator $\hat{\bbeta}$ of $\bbeta$ the following holds under Assumption \ref{assumptions}
\begin{align*}
    \E(y^\star-\hat{\bbeta}^\top\bx^\star)^2=\sigma_{\star}^2+\E\|\hat{\bbeta}-\bbeta\|^2.
\end{align*}
Further each $\hat{\bbeta}$ in our analyses is a linear function of the outcome and can be written as $\mathbf{L}\mathbf{Y}_{\rm pool}$ for some matrix $\mathbf{L}$ that is a measurable function of $\mathbb{X}_{\rm pool}$. Therefore
\begin{align*}
\E\|\hat{\bbeta}-\bbeta\|^2&=\E\|(L\mathbb{X}_{\rm pool}-\I)\bbeta\|^2+\tr(\mathbf{L}\mathbf{L}^\top).
    \end{align*}
The difference in analysis for fixed and random effects will pertain to evaluating the expectation in $\E\|(L\mathbb{X}_{\rm pool}-\I)\bbeta\|^2$ -- for random effects this marginalizes over the distribution of $\bbeta$ compared to analyzing for a given $\bbeta$ in the fixed effects case. 

\paragraph{Analysis of Pooled Ridge-Regularized Estimator -- Random Effects:} Here $\mathbf{L}=(\Xm^\top\Xm+\lambda\I)^{-1}\Xm^\top$. Moreover since $\E(\bbeta)=0$ and $\mathrm{Var}(\bbeta)=\frac{\I}{\sqrt{p}}$ we have by direct calculations that
\begin{align*}
    \E\|(L\mathbb{X}_{\rm pool}-\I)\bbeta\|^2&=\frac{1}{p}\tr((L\mathbb{X}_{\rm pool}-\I)^\top (L\mathbb{X}_{\rm pool}-\I))\\
    &=\rho^2\frac 1p\tr (G_{\rm pool}^2(-\rho)),
\end{align*}
where $\rho=\lambda/n$ and $G_{\rm pool}(z)=(\hat{\Sigma}_{\rm pool}-z\I)^{-1}$ for $z\notin \mathrm{Spectrum}(\hat{\Sigma}_{\rm pool})$. Similarly,
\begin{align*}
   \tr(\mathbf{L}\mathbf{L}^\top)=\frac{\gamma_0}{p}\E\left(\tr(G_{\rm pool}(-\rho)-\rho G_{\rm pool}^2(-\rho))\right).
\end{align*}

Next we analyze we analyze $\frac 1p\tr (G_{\rm pool}^2(-\rho))$. To this end note that, we can write $\Xm=\mathbb{Z}_{\rm pool}\Sigma^{1/2}+M$ where $\mathrm{rank}(M)\leq K$. Specifically $M=DU$ where $D\in \mathbb{R}^{n\times K}$ with $\ell^{\rm th}$ column being a vector of $1$'s of length $n_{\ell}$ starting at $\sum_{j=1}^{\ell-1} n_j+1$ and rest $0$'s, and $U=[\bmu_1,\ldots,\bmu_K]^\top\in \mathbb{R}^{K \times p}$. In this notation, we can write $$\hat{\Sigma}_{\rm pool}=\hat{\Sigma}_{\rm pool,0}+\frac{1}{n}\mathbb{Z}_{\rm pool,0}^T M+\frac{1}{n}M^\top\mathbb{Z}_{\rm pool,0}+\frac{1}{n}M^\top M$$ where $\mathbb{Z}_0=\mathbb{Z}_{\rm pool}\Sigma^{1/2}$ and $\hat{\Sigma}_{\rm pool,0}=\frac 1n \mathbb{Z}_0^\top \mathbb{Z}_0$. Therefore $\mathrm{rank}(\hat{\Sigma}_{\rm pool}-\hat{\Sigma}_{\rm pool,0})\leq 3K$. Now let $\hat{F}_{\rm pool}$ and $\hat{F}_{\rm pool,0}$ denote the empirical spectral distributions of  $\hat{\Sigma}_{\rm pool}$, and $\hat{\Sigma}_{\rm pool,0}$ respectively. Then \citep[Theorem A.43]{bai2010spectral} one has that
\begin{align*}
    \sup\limits_{x\in \mathbb{R}_+}|\hat{F}_{\rm pool}(x)-\hat{F}_{\rm pool,0}(x)|\leq \frac{3K}{p}\to 0.
\end{align*}
Next note that the functions $f_{t,\rho}(x)=\frac{1}{(x+\rho)^t}$ for $t\in \mathbb{N}$ is bounded and continuous on $\mathbb{R}_+$ for any fixed $\rho>0$ with $\|f_{t,\rho}\|_{\infty}\leq \frac{1}{\rho}$. Therefore, 
\begin{align*}
    \frac{1}{p}\tr(G^t_{\rm pool,0}(-\rho))\stackrel{\P}{\to} \int f_{t,\rho}(x)dF_{\gamma_0,H}
\end{align*}
where $H$ is the limiting spectral distribution of $\Sigma$ and $F_{\gamma_0,H}$ is the generalized Marchenko-Pastur distribution \citep{bai2010spectral}. However, 
\begin{align*}
    |\frac{1}{p}\tr(G^t_{\rm pool,0}(-\rho))-\frac{1}{p}\tr(G^t_{\rm pool}(-\rho))|=|\int f_{t,\rho}(x)d\hat{F}_{\rm pool}(x)-\int f_{t,\rho}(x)d\hat{F}_{\rm pool,0}(x) |\leq \frac{1}{\rho}\times \frac{3K}{p}\to 0.
\end{align*}
Hence,
\begin{align*}
    \frac{1}{p}\tr(G^t_{\rm pool}(-\rho))\stackrel{\P}{\to} \int f_{t,\rho}(x)dF_{\gamma_0,H}(x).
\end{align*}
Therefore
\begin{align*}
    \E\|\hat{\bbeta}_{\lambda}-\bbeta\|^2\stackrel{\P}{\to} \rho^2 \int f_{2,\rho}(x)dF_{\gamma_0,H}(x)+\gamma_0\left[\int f_{1,\rho}(x)dF_{\gamma_0,H}(x)-\rho \int f_{2,\rho}(x)dF_{\gamma_0,H}(x)\right].
\end{align*}
Now in terms of the notation of the theorem, it follows by direct calculations that
\begin{align*}
    \int f_{1,\rho}(x)dF_{\gamma_0,H}(x)&=a_{\gamma}(\rho);\\
    \int f_{2,\rho}(x)dF_{\gamma_0,H}(x)&=-a^{'}_{\gamma}(\rho).
\end{align*}
This completes the proof for the merged ridge regression when $\rho>0$.

\paragraph{Analysis of Pooled Ridgeless Estimator -- Random Effects:} In this case, we start by noting that
\begin{align*}
    \mathbf{L}\mathbf{L}&=\widehat{\Sigma}_{\rm pool}^+;\\
    \E\|(L\mathbb{X}_{\rm pool}-\I)\bbeta\|^2&=\frac{1}{p}\tr(\I-P_{\mathcal{C}(\Xm)}),
\end{align*}
where for any matrix $F$, $P_{\mathcal{C}(F)}$ stands for the projector onto the column space  $\mathcal{C}(F)$ of $F$.
Subsequently we divide the analysis into two parts, depending on $\gamma_0$ is smaller or larger than $1$ i.e. under versus overparametrized regimes. 

\subparagraph{Underparametrized Regime - $\gamma_0<1$:} We borrow the notation of the subsection and note that in this regime $\hat{\Sigma}_{\rm pool}^+=\hat{\Sigma}_{\rm pool}^{-1}$ (henceforth we workn on the event that $\hat{\Sigma}$ is invertible, that occurs with probability converging to 1) and $\hat{\Sigma}_{\rm pool,0}^+=\hat{\Sigma}_{\rm pool,0}^{-1}$. Subsequently, using the inverse identity that $A^{-1}-B^{-1}=-A^{-1}(B-A)B^{-1}$ we get that
\begin{align*}
    \hat{\Sigma}_{\rm pool}^{-1}-\hat{\Sigma}_{\rm pool,0}^{-1}&=- \hat{\Sigma}_{\rm pool}^{-1}\left(\frac{1}{n}\mathbb{Z}_{\rm pool,0}^T M+\frac{1}{n}M^\top\mathbb{Z}_{\rm pool,0}+\frac{1}{n}M^\top M\right)\hat{\Sigma}_{\rm pool,0}^{-1}.
\end{align*}
Therefore
\begin{align*}
    \mathrm{rank}(\hat{\Sigma}_{\rm pool}^{-1}-\hat{\Sigma}_{\rm pool,0}^{-1})\leq 3K.
\end{align*}
Moreover, there exists $c_{\gamma_0}<\infty$ such that with probability converging to $1$ \citep{bai2010spectral} one has that
\begin{align*}
    \|\hat{\Sigma}_{\rm pool}^{-1}-\hat{\Sigma}_{\rm pool,0}^{-1}\|_{\rm op}\leq c_{\gamma_0}.
\end{align*}
Therefore with probability converging to $1$ 
\begin{align*}
    |\frac{1}{p}\tr(\widehat{\Sigma}_{\rm pool}^+)-\frac{1}{p}\tr(\widehat{\Sigma}_{\rm pool,0}^+)|\leq \frac{1}{p}\mathrm{rank}(\hat{\Sigma}_{\rm pool}^{-1}-\hat{\Sigma}_{\rm pool,0}^{-1})\|\hat{\Sigma}_{\rm pool}^{-1}-\hat{\Sigma}_{\rm pool,0}^{-1}\|_{\rm op}\leq \frac{3Kc_{\gamma_0}}{p}\to 0.
\end{align*}
Moreover, in this regime $\frac{1}{p}\tr(\I-P_{\mathcal{C}(\Xm)})=0.$ However, in this regime \citep{hastie2019surprises},
\begin{align*}
    \frac{1}{n}\tr(\widehat{\Sigma}_{\rm pool,0}^+)\stackrel{\P}{\to} \gamma_0\int_{x>0}\frac{1}{x}dF_{\gamma_0,H}(x).
\end{align*}
This implies that
\begin{align*}
    \E\|\hat{\bbeta}-\bbeta\|^2 \stackrel{\P}{\to} \gamma_0\int_{x>0}\frac{1}{x}dF_{\gamma_0,H}(x)\textcolor{black}{=\lim_{\rho_0\downarrow 0}\left(\rho_0^2[-a_{\gamma_0}'(\rho_0)]
    +
    V_{\gamma_0}(\rho_0)\right)},
\end{align*}
as promised.

\subparagraph{Overparametrized Regime - $\gamma_0>1$:} Let $\Gamma=\frac{1}{n}\Xm\Xm^\top$ and $\Gamma_0=\frac{1}{n}\mathbb{Z}_0\mathbb{Z}_0^\top$ where $\mathbb{Z}_0=\mathbb{Z}_{\rm pool}\Sigma^{1/2}$. Now note that
\begin{align*}
    \tr(\hat{\Sigma}^+_{\rm pool})=\tr(\Gamma^{-1}),\quad  \tr(\hat{\Sigma}^+_{\rm pool,0})=\tr(\Gamma_0^{-1}).
\end{align*}
Arguing similar to before,
\begin{align*}
    \mathrm{rank}(\Gamma-\Gamma_0)\leq 3K,
\end{align*}
and there exists $c_{\gamma_0}<\infty$ such that with probability converging to $1$
\begin{align*}
    \|\Gamma\|_{\rm op}\vee \|\Gamma_0\|_{\rm op}\leq c_{\gamma_0}.
\end{align*}
Therefore
\begin{align*}
    |\frac{1}{p}\tr (\hat{\Sigma}_{\rm pool}^+)-\frac{1}{p}\tr (\hat{\Sigma}_{\rm pool,0}^+)|\leq 6Kc_{\gamma_0}/p\to 0.
\end{align*}
Therefore
\begin{align*}
    \frac{1}{p}\tr (\hat{\Sigma}_{\rm pool}^+)\stackrel{\P}{\to}\gamma_0\int_{x>0}\frac{1}{x}dF_{\gamma_0,H}(x),
\end{align*}
as before. Moreover it is easy to check by simple rank calculations that
\begin{align*}
    |\frac{1}{p}\tr(\I-P_{\mathcal{C}(\Xm)})-\frac{1}{p}\tr(\I-P_{\mathcal{C}(\mathbb{Z}_0)})|\leq K/p\to 0.
\end{align*}
But $\frac{1}{p}\tr(\I-P_{\mathcal{C}(\mathbb{Z}_0)})=1-n/p\to 1-1/\gamma_0$. Therefore as before
\begin{align*}
    \E\|\hat{\bbeta}-\bbeta\|^2 \stackrel{\P}{\to} \gamma_0\int_{x>0}\frac{1}{x}dF_{\gamma_0,H}(x)\textcolor{black}{=\lim_{\rho_0\downarrow 0}\left(\rho_0^2[-a_{\gamma_0}'(\rho_0)]
    +
    V_{\gamma_0}(\rho_0)\right)},
\end{align*}
as promised.

\paragraph{Analysis of Pooled Ridge-Regularized Estimator -- Fixed Effects:} First note that by direct calculations
\begin{align*}
    \E\|\hat{\bbeta}_{\lambda}-\bbeta\|^2=\sigma_\star^2+\rho_0^2\bbeta^\top G_{\rm pool}(-\rho_0)\bbeta+\frac{1}{n}\tr(\hat{\Sigma}_{\rm pool}G_{\rm pool}(-\rho_0))=\sigma_\star^2+B_n+V_n \quad (\text{say}).
\end{align*}
The analysis of $V_n\stackrel{\P}{\to}V_{\gamma_0}(\rho_0)$ is as before, and we only analyze the bias $B_n$ below.
Write as before
\[
\Xm=\mathbb{Z}_0+M,
\]
For each cluster define
\[
\bar z_\ell
=
\frac1{n_\ell}\sum_{i=1}^{n_\ell}\bZ_{i,\ell},
\qquad
\widetilde Z_{i,\ell}
=
\bZ_{i,\ell}-\bar z_\ell.
\]
Let
\[
S_z
=
\frac1n
\sum_{\ell=1}^K
\sum_{i=1}^{n_\ell}
\Sigma^{1/2}\bZ_{i,\ell}\bZ_{i,\ell}^\top\Sigma^{1/2},
\]
and
\[
S_n^{(-)}
=
\frac1n
\sum_{\ell=1}^K
\sum_{i=1}^{n_\ell}
\Sigma^{1/2}
\widetilde \bZ_{i,\ell}
\widetilde \bZ_{i,\ell}^{\top}
\Sigma^{1/2}.
\]
Recall
\[
\pi_{\ell,n}=\frac{n_\ell}{n},
\qquad
U_n=
[
\sqrt{\pi_{1,n}}\mu_1,\ldots,\sqrt{\pi_{K,n}}\mu_K
],
\]
and define
\[
R_n=
[
\sqrt{\pi_{1,n}}\Sigma^{1/2}\bar z_1,\ldots,
\sqrt{\pi_{K,n}}\Sigma^{1/2}\bar z_K
].
\]
Then
\[
S_z
=
S_n^{(-)}+R_nR_n^\top.
\]
Also, since inside cluster \(\ell\),
\[
\bX_{i,\ell}
=
\bmu_\ell+\Sigma^{1/2}\bar z_\ell
+
\Sigma^{1/2}\widetilde \bZ_{i,\ell},
\]
and
\[
\sum_{i=1}^{n_\ell}\widetilde Z_{i,\ell}=0,
\]
we get the exact decomposition
\[
\widehat\Sigma_{\rm pool}
=
S_n^{(-)}+(U_n+R_n)(U_n+R_n)^\top.
\]
Now for \(\rho>0\), define the two resolvents
\[
H_n(\rho)
=
(S_z+\rho \I)^{-1},
\qquad
G_n^{(-)}(\rho)
=
(S_n^{(-)}+\rho \I)^{-1}.
\]
%The proof first controls bilinear forms involving \(R_n\) and \(H_n(\rho)\) by row-level leave-one-out and leave-two-out arguments, and then converts those limits to corresponding bilinear forms involving \(G_n^{(-)}(\rho)\) using Woodbury.
\begin{lemma}\label{lemma:hn}
    Fix \(\rho\) in a compact subset of \((0,\infty)\). Then, locally uniformly in \(\rho\),
\[
R_n^\top H_n(\rho)R_n
\overset{\P}{\longrightarrow}
\frac{\gamma_0s_{\gamma_0}(\rho)}
{1+\gamma_0s_{\gamma_0}(\rho)}
\I_K,
\]
where $s_\gamma(\rho)
    =
    \frac1p\tr\{\Sigma Q_\gamma(\rho)\},$ and $\I_K$ is the identity matrix of order $K$.
Moreover, for every deterministic \(v\in\mathbb R^p\) with uniformly bounded norm,
\[
v^\top H_n(\rho)R_n
\overset{\P}{\longrightarrow}0,
\qquad
R_n^\top H_n(\rho)v
\overset{\P}{\longrightarrow}0.
\]

\end{lemma}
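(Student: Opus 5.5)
The plan is to turn $R_n$ into a deterministic low-rank projection of the design matrix, so that both claims reduce to quadratic and bilinear forms of resolvents along \emph{deterministic} directions. Write $\bx_{i,\ell}^0=\Sigma^{1/2}\bZ_{i,\ell}$ for the rows of $\mathbb{Z}_0$. Then
\[
\sqrt{\pi_{\ell,n}}\,\Sigma^{1/2}\bar z_\ell=\frac{1}{\sqrt{n\,n_\ell}}\sum_{i=1}^{n_\ell}\bx^0_{i,\ell},
\qquad\text{so}\qquad
R_n=\frac{1}{\sqrt n}\,\mathbb{Z}_0^\top O .
\]
Here $O\in\mathbb{R}^{n\times K}$ is deterministic, and its $\ell$-th column is $n_\ell^{-1/2}$ times the indicator of cluster $\ell$. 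Thus $O^\top O=\I_K$, and the columns $o_\ell$ are orthonormal. Set $\Gamma_0=\frac1n\mathbb{Z}_0\mathbb{Z}_0^\top$, so that $S_z=\frac1n\mathbb{Z}_0^\top\mathbb{Z}_0$. The push-through identity $H_n(\rho)\mathbb{Z}_0^\top=\mathbb{Z}_0^\top(\Gamma_0+\rho\I)^{-1}$ gives
\[
R_n^\top H_n(\rho)R_n=O^\top\Big[\tfrac1n\mathbb{Z}_0(\Gamma_0+\rho\I)^{-1}\mathbb{Z}_0^\top\Big]O=\I_K-\rho\, O^\top(\Gamma_0+\rho\I)^{-1}O .
\]
So the first claim reduces to $o_\ell^\top(\Gamma_0+\rho\I)^{-1}o_{\ell'}\to m_{\gamma_0}(\rho)\,\delta_{\ell\ell'}$ in probability.

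To get this limit, I would invoke the anisotropic local law of \citet{knowles2017anisotropic} for the companion ($n\times n$) block of the linearized resolvent. This is the same theorem behind Proposition~\ref{prop:prop_anisotropic_local_law}, applied to the other block. It states that for deterministic unit vectors $u,w\in\mathbb{R}^n$, $u^\top(\Gamma_0+\rho\I)^{-1}w=\tilde m_n(-\rho)\,u^\top w+O(n^{-1/2+\alpha_0})$ with overwhelming probability. The identification $\tilde m_n(-\rho)=m_{\gamma_0}(\rho)$ is immediate from the fixed-point equations, since $\frac1p\tr\{\Sigma(\I+m\Sigma)^{-1}\}=\rho\, s_{\gamma_0}(\rho)$. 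The defining equation then reads $1/m=\rho\{1+\gamma_0 s_{\gamma_0}(\rho)\}$, hence
\[
1-\rho m_{\gamma_0}(\rho)=\frac{\gamma_0s_{\gamma_0}(\rho)}{1+\gamma_0s_{\gamma_0}(\rho)},
\]
which is exactly the stated limit.

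For the second claim, the same identity gives $v^\top H_n(\rho)R_n=v^\top H_n(\rho)\frac{1}{\sqrt n}\mathbb{Z}_0^\top O$. Its $\ell$-th entry is an off-diagonal-block entry $\frac{1}{\sqrt n}v^\top(S_z+\rho\I)^{-1}\mathbb{Z}_0^\top o_\ell$ of the linearized resolvent, tested against the deterministic pair $(v,o_\ell)$. The anisotropic law states that the deterministic equivalent of this block is zero, with error $O(n^{-1/2+\alpha_0})$, so the entry tends to $0$. The transpose statement is identical. If I prefer not to rely on the off-diagonal block, the fallback is a direct second-moment bound. Write the entry as $\sum_i o_{\ell,i}\, n^{-1/2}v^\top H_n\bx^0_i$. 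Use the leave-one-out formula $H_n\bx^0_i=H_{n,-i}\bx^0_i/(1+\frac1n\bx_i^{0\top}H_{n,-i}\bx^0_i)$, and control the cross terms $i\neq j$ by a leave-two-out expansion, using that $\bx^0_i$ is centered and independent of $H_{n,-ij}$. Each diagonal term is $O(1/n)$ because $\|o_\ell\|=1$. The cross terms vanish at leading order by centering, with remainders of order $n^{-1/2}$.

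Local uniformity in $\rho$ on a compact $[\rho_1,\rho_2]\subset(0,\infty)$ follows from a grid argument. All the random quantities above are Lipschitz in $\rho$ with constant $O(\rho_1^{-2})$ on the event that $\|\Gamma_0\|_{\op}$ is bounded, as are the deterministic limits. The local-law bounds hold simultaneously on a polynomial grid by a union bound, since each fails with probability at most $n^{-D}$. I expect the main obstacle to be justifying the companion-block and off-diagonal-block isotropic laws. Proposition~\ref{prop:prop_anisotropic_local_law} states only the $p\times p$ block, so I would either quote the full linearized statement of \citet{knowles2017anisotropic} or carry out the leave-one-out/leave-two-out moment computation sketched above.
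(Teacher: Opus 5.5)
Your argument is correct, and it follows a genuinely different route from the paper. The paper fixes a single cluster and expands $\bar z_\ell^\top\Sigma^{1/2}H_n\Sigma^{1/2}\bar z_\ell$ as a double sum over rows. It treats the $i=j$ terms by leave-one-out, Hanson--Wright and the averaged Marchenko--Pastur trace law. It disposes of the $i\neq j$ terms by a leave-two-out second-moment bound, with leave-three and leave-four expansions of the covariances. It then handles $v^\top H_nR_n$ by a leave-one-out variance argument. You instead use the exact representation $R_n=n^{-1/2}\mathbb{Z}_0^\top O$ with a deterministic isometry $O$. Push-through then gives $R_n^\top H_n(\rho)R_n=\mathbb{I}_K-\rho\,O^\top(\Gamma_0+\rho\mathbb{I})^{-1}O$ exactly, and it turns $v^\top H_nR_n$ into an off-diagonal-block entry of the linearized resolvent. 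Everything is then a bilinear form along deterministic directions, and your identification $1-\rho\, m_{\gamma_0}(\rho)=\gamma_0 s_{\gamma_0}(\rho)/\{1+\gamma_0 s_{\gamma_0}(\rho)\}$ is right.

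Your route buys three things. It is short, and it gives explicit rates $n^{-1/2+\alpha_0}$ with overwhelming probability, which makes the grid and union-bound uniformity in $\rho$ immediate. It also yields the cross-cluster entries $r_{\ell,n}^\top H_n r_{\ell',n}$, $\ell\neq\ell'$, for free via $o_\ell^\top o_{\ell'}=0$; the paper's single-cluster computation never treats these entries explicitly. Finally, it sidesteps the random denominators $1+q_i/n$ that the paper drops when passing to $U_m$.

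The cost is a heavier black box. You need the companion and off-diagonal blocks of the Knowles--Yin linearized law at $z=-\rho$, which go beyond Proposition~\ref{prop:prop_anisotropic_local_law} as stated. These blocks are in Knowles--Yin, so quoting them is legitimate. With the usual linearization the off-diagonal block is $\Sigma^{1/2}H_n(\rho)\,n^{-1/2}\mathbb{Z}_0^\top$, so you test against $\Sigma^{-1/2}v$, which is bounded since $\lambda_1(\Sigma)\ge\eta$. The paper's route, by contrast, needs only averaged trace laws and elementary resolvent identities, at the price of a long and delicate moment computation.
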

\begin{proof}
    Fix a cluster \(\ell\), write \(m=n_\ell\), and suppress the cluster index and dependence on resolvents on $\rho$ whenever clear from context. Let
\[
\bar z=\frac1m\sum_{i=1}^m z_i,
\qquad
r_{\ell,n}
=
\sqrt{\pi_{\ell,n}}\Sigma^{1/2}\bar z.
\]
Then
\[
r_{\ell,n}^\top H_n r_{\ell,n}
=
\pi_{\ell,n}
\bar z^\top
\Sigma^{1/2}H_n\Sigma^{1/2}
\bar z.
\]
Therefore
\[
\bar z^\top
\Sigma^{1/2}H_n\Sigma^{1/2}
\bar z
=
\frac1{m^2}
\sum_{i,j=1}^m
\bZ_i^\top
\Sigma^{1/2}H_n\Sigma^{1/2}
\bZ_j.
\]
We first treat the diagonal terms i.e. $i=j$. Let \(H_n^{(-i)}\) be the resolvent with the row \(\bZ_i\) removed from \(S_z\). That is,
\[
H_n^{(-i)}
=
\left(
S_z-\frac1n\Sigma^{1/2}\bZ_i\bZ_i^\top\Sigma^{1/2}
+\rho \I
\right)^{-1}.
\]
Now let
\[
B_i=\Sigma^{1/2}H_n^{(-i)}\Sigma^{1/2},
\qquad
q_i=\bZ_i^\top B_i \bZ_i.
\]
Then by Sherman-Morrison formula,
\[
H_n
=
H_n^{(-i)}
-
\frac{
\frac1nH_n^{(-i)}
\Sigma^{1/2}\bZ_i\bZ_i^\top\Sigma^{1/2}
H_n^{(-i)}
}{
1+\frac1n \bZ_i^\top B_i \bZ_i
}.
\]
Therefore
\[
\bZ_i^\top\Sigma^{1/2}H_n\Sigma^{1/2}\bZ_i
=
\frac{q_i}{1+q_i/n}.
\]
now note that \(H_n^{(-i)}\) is independent of \(z_i\) and there exists a constant $C_{\rho}<\infty$ such that with probability converging to $1$ \citep{bai2010spectral}
\[
\|H_n^{(-i)}\|_{\rm op}\le \rho^{-1},
\qquad
\|B_i\|_{\rm op}\le C_\rho,
\]
Therefore by Hanson--Wright inequality we have by union bound that uniformly over \(i\) 
\[
q_i
=
\tr B_i
+
O_{\P}(\sqrt{p\log n}).
\]

Also, removing one row changes normalized traces by \(o_{\P}(1)\) uniformly over rows \citep[Theorem A.43]{bai2010spectral}. Therefore uniformly over $i$
\[
\frac1p\tr B_i
=
\frac1p\tr\{\Sigma^{1/2}H_n^{(-i)}\Sigma^{1/2}\}
\stackrel{\P}{\to}
s_{\gamma_0}(\rho).
\]
Consequently,
\[
\frac1{m^2}
\sum_{i=1}^m
\bZ_i^\top\Sigma^{1/2}H_n\Sigma^{1/2}\bZ_i
=
\frac1{m^2}
\sum_{i=1}^m
\frac{q_i}{1+q_i/n}
\]
satisfies
\[
\frac1{m^2}
\sum_{i=1}^m
\frac{q_i}{1+q_i/n}
-
\frac1m
\cdot
\frac{p\,s_{\gamma_0}(\rho)}
{1+\gamma_0s_{\gamma_0}(\rho)}\stackrel{\P}{\to} 0.
\]
Since
\[
\frac pm
=
\frac{p/n}{n_\ell/n}
\to
\frac{\gamma_0}{\pi_\ell},
\]
we obtain
\[
\frac1{m^2}
\sum_{i=1}^m
\bZ_i^\top\Sigma^{1/2}H_n\Sigma^{1/2}\bZ_i
\stackrel{\P}{\to}
\frac{\gamma_0}{\pi_\ell}
\frac{s_{\gamma_0}(\rho)}
{1+\gamma_0s_{\gamma_0}(\rho)}.
\]
Next, we consider the off-diagonal terms \(i\ne j\). Here we employ a leave-two-out technique as follows. We start by leaving out the index $i$ and write using the Sherman-Morrison formula
\begin{align*}
\bZ_i^\top\Sigma^{1/2}H_n\Sigma^{1/2}\bZ_i=\frac{\bZ_i^\top \Sigma^{1/2}H_n^{(-i)}\Sigma^{1/2}\bZ_j}{1+\frac{1}{n}\bZ_i^\top\Sigma^{1/2} H_n^{(-i)}\Sigma^{1/2}\bZ_i}.
\end{align*}
Now, by applying the Sherman-Morrison formula once more to leave out the index $j$ we have
\begin{align*}
  \bZ_i^\top\Sigma^{1/2} H_n^{(-i)}\bZ_j\Sigma^{1/2}&=\frac{\bZ_i^\top\Sigma^{1/2} H_n^{(-ij)}\bZ_j\Sigma^{1/2}}{1+\bZ_j^\top \Sigma^{1/2}H_n^{(-i,j)}\bZ_j\Sigma^{1/2}}, 
\end{align*}
where
\[
H_n^{(-ij)}
=
\left(
S_z-\frac1n\Sigma^{1/2}\bZ_i\bZ_i^\top\Sigma^{1/2}-\frac1n\Sigma^{1/2}\bZ_j\bZ_j^\top\Sigma^{1/2}
+\rho \I
\right)^{-1}.
\]
Therefore
\begin{align*}
    \bZ_i^\top\Sigma^{1/2}H_n\Sigma^{1/2}\bZ_i=\frac{\bZ_i^\top\Sigma^{1/2} H_n^{(-ij)}\bZ_j\Sigma^{1/2}}{(1+\frac{1}{n}\bZ_i^\top\Sigma^{1/2} H_n^{(-i)}\bZ_i\Sigma^{1/2})(1+\frac{1}{n}\bZ_j^\top\Sigma^{1/2} H_n^{(-ij)}\bZ_j\Sigma^{1/2})}
\end{align*}
Therefore, we essentially need to control
\begin{align*}
    \frac{1}{m^2}\sum\limits_{i\neq j=1}^m \frac{\bZ_i^\top\Sigma^{1/2} H_n^{(-ij)}\bZ_j\Sigma^{1/2}}{(1+\frac{1}{n}\bZ_i^\top\Sigma^{1/2} H_n^{(-i)}\bZ_i\Sigma^{1/2})(1+\frac{1}{n}\bZ_j^\top\Sigma^{1/2} H_n^{(-ij)}\bZ_j\Sigma^{1/2})}.
\end{align*}
%By independence of $\bZ_i$ and $H_n^{(-i)}$ and that of $\bZ_j$ and $H_n^{(-i,j)}$ we have by Hanson-Wright inequality and union bound that their exists a constant $C>0$ (depending on $\gamma_0,\rho,H$ where $H$ is the limiting empirical spectral distribution of $\Sigma$) such that  
%\begin{align*}
 %   \ &\frac{1}{m^2}\sum\limits_{i\neq j=1}^m \frac{\bZ_i^\top\Sigma^{1/2} H_n^{(-i,j)}\bZ_j\Sigma^{1/2}}{(1+\frac{1}{n}\bZ_i^\top\Sigma^{1/2} H_n^{(-i)}\bZ_i\Sigma^{1/2})(1+\frac{1}{n}\bZ_j^\top\Sigma^{1/2} H_n^{(-i,j)}\bZ_j\Sigma^{1/2})}\\
  %  &= \frac{1}{m^2}\sum\limits_{i\neq j=1}^m \frac{\bZ_i^\top\Sigma^{1/2} H_n^{(-i,j)}\bZ_j\Sigma^{1/2}}{(1+C)^2} +o_{\P}(1).
%\end{align*}
%Now the first term of the last display has mean $0$. In order to compute the variance, we note that since each of these summands has standard deviation  $O(\sqrt{p})$ uniformly (this follows by Hanson-Wright inequality and the convergence to a generalized Marchenko-Pastur law of the empirical spectral distribution of $H_n^{(-i,j)}$ \citep{bai2010spectral}), it is enough to 
\[
\frac{1}{m^2}
\sum_{i\ne j}
\frac{
\bZ_i^\top \Sigma^{1/2}H_n^{(-ij)}\Sigma^{1/2}Z_j
}{
\left(1+n^{-1}\bZ_i^\top \Sigma^{1/2}H_n^{(-i)}\Sigma^{1/2}\bZ_i\right)
\left(1+n^{-1}\bZ_j^\top \Sigma^{1/2}H_n^{(-ij)}\Sigma^{1/2}\bZ_j\right)
}
=o_{\P}(1).
\]
Letting 
\[
B_{ij}=\Sigma^{1/2}H_n^{(-ij)}\Sigma^{1/2},
\]
we note that summands are not mutually independent, because \(B_{ij}\) depends on the
observations other than \(i,j\).  We handle this by decoupling.  Let \[
U_m=\sum_{i\ne j}\bZ_i^\top B_{ij}\bZ_j,
\qquad
B_{ij}=\Sigma^{1/2}H_n^{(-ij)}\Sigma^{1/2}.
\]
Since \(H_n^{(-ij)}\) removes rows \(i\) and \(j\), the matrix \(B_{ij}\) is
measurable with respect to \(\sigma(\bZ_k:k\ne i,j)\), and is therefore
independent of \((\bZ_i,\bZ_j)\).  Moreover, because \(\rho\) is bounded away from
zero and \(\|\Sigma\|_{\mathrm{op}}\) is bounded,
\[
\|B_{ij}\|_{\mathrm{op}}\le C_\rho,\qquad
\operatorname{tr}(B_{ij}^2)\le C_\rho p
\]
uniformly in \(i,j\), with probability tending to one. The issue is that the summands \(\xi_i^\top R_{ij}\xi_j\) are not mutually independent, because
\(R_{ij}\) depends on all rows except \(i,j\).  We therefore prove the required bound by a
leave-four covariance calculation.  Expanding the second moment gives
\[
\E U_m^2
=
\sum_{i\ne j}\sum_{k\ne \ell}
\E\left[
\xi_i^\top R_{ij}\xi_j\,
\xi_k^\top R_{k\ell}\xi_\ell
\right].
\]
We classify the terms according to the overlap between the ordered pairs \((i,j)\) and
\((k,\ell)\).

First, consider the diagonal and reversed-pair cases.  If \((i,j)=(k,\ell)\), then, conditional on
\(R_{ij}\),
\[
\E\left[(\xi_i^\top R_{ij}\xi_j)^2\mid R_{ij}\right]
\le C\,\operatorname{tr}\{(\Sigma^{1/2}R_{ij}\Sigma^{1/2})^2\}
\le Cp.
\]
The same bound holds when \((i,j)=(\ell,k)\).  Since there are \(O(m^2)\) such terms, their total
contribution is \(O(m^2p)\).

Next suppose that the two ordered pairs share exactly one index.  For concreteness, take
\(i=k\), with \(j\ne \ell\); the other cases are identical.  Let
\[
R_{ij\ell}:=H_n^{(-i,-j,-\ell)}.
\]
By the Sherman--Morrison identity,
\[
R_{ij}
=
R_{ij\ell}
-
\frac{n^{-1}R_{ij\ell}\xi_\ell\xi_\ell^\top R_{ij\ell}}
{1+n^{-1}\xi_\ell^\top R_{ij\ell}\xi_\ell}.
\]
Hence
\[
\xi_i^\top R_{ij}\xi_j
=
\xi_i^\top R_{ij\ell}\xi_j
-
\frac{
n^{-1}(\xi_i^\top R_{ij\ell}\xi_\ell)
(\xi_\ell^\top R_{ij\ell}\xi_j)
}{
1+n^{-1}\xi_\ell^\top R_{ij\ell}\xi_\ell
}.
\]
Similarly,
\[
R_{i\ell}
=
R_{ij\ell}
-
\frac{n^{-1}R_{ij\ell}\xi_j\xi_j^\top R_{ij\ell}}
{1+n^{-1}\xi_j^\top R_{ij\ell}\xi_j}.
\]
Therefore
\[
\xi_i^\top R_{i\ell}\xi_\ell
=
\xi_i^\top R_{ij\ell}\xi_\ell
-
\frac{
n^{-1}(\xi_i^\top R_{ij\ell}\xi_j)
(\xi_j^\top R_{ij\ell}\xi_\ell)
}{
1+n^{-1}\xi_j^\top R_{ij\ell}\xi_j
}.
\]
Conditional on \(R_{ij\ell}\), the vectors \(\xi_i,\xi_j,\xi_\ell\) are independent and centered.
Thus the product of the two leading terms has conditional expectation zero:
\[
\E\left[
(\xi_i^\top R_{ij\ell}\xi_j)
(\xi_i^\top R_{ij\ell}\xi_\ell)
\mid R_{ij\ell}
\right]=0.
\]
The remaining terms each contain at least one factor \(n^{-1}\).  Using
\(\|R_{ij\ell}\|_{\op}\le \rho^{-1}\), the boundedness of \(\|\Sigma\|_{\op}\), and the standard
quadratic-form bounds
\[
\E\left[(\xi_a^\top R_{ij\ell}\xi_b)^2\mid R_{ij\ell}\right]\le Cp
\quad(a\ne b),
\qquad
\E\left[\xi_a^\top R_{ij\ell}\xi_a\mid R_{ij\ell}\right]\le Cp,
\]
we obtain
\[
\left|
\E\left[
\xi_i^\top R_{ij}\xi_j\,
\xi_i^\top R_{i\ell}\xi_\ell
\right]
\right|
\le C\,\frac{p}{n}.
\]
Since there are \(O(m^3)\) ordered quadruples with exactly one shared index, the total contribution
of all one-overlap terms is
\[
O\left(m^3\frac{p}{n}\right).
\]

Finally suppose \(i,j,k,\ell\) are all distinct.  Let
\[
R_{ijk\ell}:=H_n^{(-i,-j,-k,-\ell)}.
\]
Apply Sherman--Morrison twice to express both \(R_{ij}\) and \(R_{k\ell}\) as rank-one updates of
the common leave-four resolvent \(R_{ijk\ell}\).  The leading product is
\[
(\xi_i^\top R_{ijk\ell}\xi_j)(\xi_k^\top R_{ijk\ell}\xi_\ell),
\]
whose conditional expectation given \(R_{ijk\ell}\) is zero, because
\(\xi_i,\xi_j,\xi_k,\xi_\ell\) are independent centered vectors.  Every nonzero contribution must
therefore come from resolvent-update terms.  In the disjoint case, to obtain a nonzero conditional
expectation, both factors must be updated: one update is needed to introduce \(\xi_k\) or \(\xi_\ell\)
into the first factor, and another update is needed to introduce \(\xi_i\) or \(\xi_j\) into the second
factor.  Thus every nonzero term carries at least two factors of \(n^{-1}\).  Using again the
boundedness of the leave-four resolvent and the quadratic-form moment bounds, each such term is
bounded by
\[
C\,\frac{p}{n^2}.
\]
Hence, for four distinct indices,
\[
\left|
\E\left[
\xi_i^\top R_{ij}\xi_j\,
\xi_k^\top R_{k\ell}\xi_\ell
\right]
\right|
\le C\,\frac{p}{n^2}.
\]
There are \(O(m^4)\) such quadruples, so their total contribution is
\[
O\left(m^4\frac{p}{n^2}\right).
\]

Combining the three cases,
\[
\E U_m^2
\le
C\left[
m^2p
+
m^3\frac{p}{n}
+
m^4\frac{p}{n^2}
\right].
\]
Since \(m/n\to \pi_\ell\in(0,1)\), this simplifies to
\[
E U_m^2\le C m^2p.
\]
Therefore
\[
U_m=O_{\P}(m\sqrt p),
\]
and hence
\[
\frac{1}{m^2}U_m
=
O_P\left(\frac{\sqrt p}{m}\right)
=o_P(1),
\]
because \(p/m\to \gamma_0/\pi_\ell<\infty\).  This proves that the off-diagonal contribution is
negligible.

Finally, for deterministic bounded \(v\),
\[
v^\top H_n(\rho)r_{\ell,n}
=
\sqrt{\pi_{\ell,n}}
v^\top H_n(\rho)\Sigma^{1/2}\bar z_\ell.
\]
Expanding the empirical mean and applying leave-one-out gives
\[
v^\top H_n\Sigma^{1/2}\bar z_\ell
=
\frac1m\sum_{i=1}^m
v^\top H_n^{(-i)}\Sigma^{1/2}\bZ_{i,\ell}
+
o_{\P}(1).
\]
Conditional on \(H_n^{(-i)}\), the summands have mean zero and uniformly bounded variance. Hence the average is \(O_{\P}(m^{-1/2})=o_{\P}(1)\). Therefore
\[
v^\top H_n(\rho)r_{\ell,n}\stackrel{\P}{\to} 0.
\]
This proves the lemma.
\end{proof}

The next lemma connects $H_n$ with $G_n^{(-)}$.
\begin{lemma}\label{lemma:hn_gn}
    Locally uniformly for \(\rho\) in compact subsets of \((0,\infty)\),
\[
R_n^\top G_n^{(-)}(\rho)R_n
\overset{\P}{\longrightarrow}
\gamma_0s_{\gamma_0}(\rho)I_K,
\]
and for deterministic bounded \(v\),
\[
v^\top G_n^{(-)}(\rho)R_n\overset{\P}{\longrightarrow}0,
\qquad
R_n^\top G_n^{(-)}(\rho)v\overset{\P}{\longrightarrow}0.
\]
\end{lemma}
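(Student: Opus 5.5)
Since $S_z=S_n^{(-)}+R_nR_n^\top$, we have $H_n(\rho)=(G_n^{(-)}(\rho)^{-1}+R_nR_n^\top)^{-1}$. The plan is to invert this relation by Woodbury and transfer the limits of Lemma~\ref{lemma:hn} from $H_n$ to $G_n^{(-)}$. Abbreviate $G=G_n^{(-)}(\rho)$, $H=H_n(\rho)$ and $R=R_n$. Woodbury gives
\[
H=G-GR(\I_K+R^\top GR)^{-1}R^\top G,
\]
and hence, with $A:=R^\top GR$ (a positive semidefinite $K\times K$ matrix),
\[
R^\top HR=A-A(\I_K+A)^{-1}A=A(\I_K+A)^{-1}=\I_K-(\I_K+A)^{-1}.
\]
This exact identity is invertible: $(\I_K+A)^{-1}=\I_K-R^\top HR$, so $A=(\I_K-R^\top HR)^{-1}-\I_K$.

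By Lemma~\ref{lemma:hn}, $R^\top HR\to c\,\I_K$ in probability with $c=\gamma_0 s/(1+\gamma_0 s)\in[0,1)$, where $s=s_{\gamma_0}(\rho)$. Matrix inversion is continuous at $(1-c)\I_K$, which is invertible since $c<1$. By the continuous mapping theorem,
\[
A\stackrel{\P}{\to}\Bigl(\frac{1}{1-c}-1\Bigr)\I_K=\gamma_0 s\,\I_K,
\]
because $1/(1-c)=1+\gamma_0 s$. This yields the first claim.

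For the bilinear forms with a deterministic $v$, apply the same Woodbury identity on one side:
\[
v^\top HR=v^\top GR-v^\top GR(\I_K+A)^{-1}A=v^\top GR\,(\I_K+A)^{-1}.
\]
Therefore $v^\top GR=v^\top HR\,(\I_K+A)$. Here $v^\top HR\to 0$ by Lemma~\ref{lemma:hn}, and $\I_K+A=O_\P(1)$ by the previous step, so $v^\top GR\to 0$ in probability. The transposed statement follows by symmetry of $G$ and $H$.

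Local uniformity in $\rho$ over compact subsets of $(0,\infty)$ is inherited from Lemma~\ref{lemma:hn}. The maps above are continuous uniformly on compacts, $s_{\gamma_0}(\rho)$ is continuous in $\rho$, and $1-c(\rho)$ stays bounded away from zero on compacts because $s_{\gamma_0}(\rho)\le\|\Sigma\|_{\op}/\rho$ is bounded. For uniform convergence it suffices that $\|(\I_K-R^\top HR)^{-1}\|$ is bounded with probability tending to one uniformly in $\rho$; this follows since $R^\top HR$ is uniformly close to $c(\rho)\I_K$ and $\sup_\rho c(\rho)<1$ on the compact.

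The main obstacle is not in this lemma: once the exact algebraic identity is in place the transfer is routine. The only care needed is to check that $\I_K-R^\top HR$ is invertible with high probability. This holds because $R^\top HR=A(\I_K+A)^{-1}$ with $A\succeq 0$, so its eigenvalues lie in $[0,1)$, and they are bounded away from $1$ by the limit $c<1$.
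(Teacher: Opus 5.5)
Your proposal is correct and matches the paper's proof essentially step for step. In both, Woodbury gives $R_n^\top H_nR_n=T_n(\I_K+T_n)^{-1}$; inverting this (the paper's $T_n=L_n(\I_K-L_n)^{-1}$ is the same as your $(\I_K-L_n)^{-1}-\I_K$) yields $T_n\to\gamma_0 s_{\gamma_0}(\rho)\I_K$, and $v^\top G_n^{(-)}R_n=v^\top H_nR_n(\I_K+T_n)$ transfers the vanishing cross terms, with your remarks on invertibility and uniformity in $\rho$ adding detail the paper leaves implicit.
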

\begin{proof}
    Recall
\[
S_z=S_n^{(-)}+R_nR_n^\top.
\]
Therefore
\[
H_n(\rho)
=
(S_z+\rho \I)^{-1}
=
(S_n^{(-)}+\rho \I+R_nR_n^\top)^{-1}.
\]
Therefore, Woodbury's identity gives
\[
H_n
=
G_n^{(-)}
-
G_n^{(-)}R_n
\left(\I_K+R_n^\top G_n^{(-)}R_n\right)^{-1}
R_n^\top G_n^{(-)}.
\]
Let
\[
T_n=R_n^\top G_n^{(-)}R_n,
\qquad
L_n=R_n^\top H_nR_n.
\]
Multiplying the Woodbury identity by \(R_n^\top\) and \(R_n\) gives
\[
L_n
=
T_n-T_n(I_K+T_n)^{-1}T_n
=
T_n(I_K+T_n)^{-1}.
\]
By Lemma \ref{lemma:hn},
\[
L_n
\stackrel{\P}{\to}
\frac{d}{1+d}\I_K,
\qquad
d=\gamma_0s_{\gamma_0}(\rho).
\]
Since
\[
T_n=L_n(\I_K-L_n)^{-1},
\]
we get
\[
T_n
\stackrel{\P}{\to}
\frac{d}{1+d}
\left(1-\frac{d}{1+d}\right)^{-1}
\I_K
=
d\I_K.
\]
Hence
\[
R_n^\top G_n^{(-)}R_n
\stackrel{\P}{\to}
\gamma_0s_{\gamma_0}(\rho)\I_K.
\]
For the mixed bilinear form, multiply the same Woodbury identity by \(v^\top\) and \(R_n\):
\[
v^\top H_nR_n
=
v^\top G_n^{(-)}R_n
-
v^\top G_n^{(-)}R_n(\I_K+T_n)^{-1}T_n.
\]
Thus
\[
v^\top H_nR_n
=
v^\top G_n^{(-)}R_n
\left[
\I_K-(\I_K+T_n)^{-1}T_n
\right]
=
v^\top G_n^{(-)}R_n(\I_K+T_n)^{-1}.
\]
Therefore
\[
v^\top G_n^{(-)}R_n
=
v^\top H_nR_n(\I_K+T_n).
\]
Lemma \ref{lemma:hn} gives \(v^\top H_nR_n\stackrel{\P}{\to}0\), and \(T_n=O_{\P}(1)\). Hence
\[
v^\top G_n^{(-)}R_n\stackrel{\P}{\to}0.
\]
The other statement follows identically. This proves the lemma.
\(\square\)
\end{proof}
We now complete the proof of the pooled ridge learner using the lemmas. By the anisotropic local law \citep{knowles2017anisotropic} and the fact that removing \(K\) empirical mean directions is finite rank, for deterministic bounded \(v_1,v_2\),
\[
v_1^\top G_n^{(-)}(\rho)v_2
-
v_1^\top Q_{\gamma_0}(\rho)v_2
\overset{\P}{\longrightarrow}0.
\]
Consequently,
\[
U_n^\top G_n^{(-)}(\rho)U_n
\stackrel{\P}{\to}
U_0^\top Q_{\gamma_0}(\rho)U_0,
\]
and by Lemma \ref{lemma:hn_gn},
\[
U_n^\top G_n^{(-)}(\rho)R_n\stackrel{\P}{\to}0,
\qquad
R_n^\top G_n^{(-)}(\rho)U_n\stackrel{\P}{\to}0.
\]
Next note that, since
\[
\widehat\Sigma_{\rm pool}
=
S_n^{(-)}+(U_n+R_n)(U_n+R_n)^\top,
\]
Woodbury's identity gives
\[
G_{\rm pool}(-\rho)
=
G_n^{(-)}(\rho)
-
G_n^{(-)}(\rho)(U_n+R_n)
D_n(\rho)^{-1}
(U_n+R_n)^\top G_n^{(-)}(\rho),
\]
where
\[
D_n(\rho)
=
\I_K+(U_n+R_n)^\top G_n^{(-)}(\rho)(U_n+R_n).
\]
Therefore, \[
(U_n+R_n)^\top G_n^{(-)}(\rho)(U_n+R_n)
\stackrel{\P}{\to}
U_0^\top Q_{\gamma_0}(\rho)U_0
+
\gamma_0s_{\gamma_0}(\rho)\I_K.
\]
Thus
\[
D_n(\rho)
\stackrel{\P}{\to}
\{1+\gamma_0s_{\gamma_0}(\rho)\}\I_K
+
U_0^\top Q_{\gamma_0}(\rho)U_0.
\]
Also, for deterministic norm-bounded \(v\),
\[
v^\top G_n^{(-)}(\rho)(U_n+R_n)
\stackrel{\P}{\to}
v^\top Q_{\gamma_0}(\rho)U_0,
\]
since
\[
v^\top G_n^{(-)}(\rho)R_n\stackrel{\P}{\to}0.
\]
Therefore
\[
v^\top G_{\rm pool}(-\rho)v
\stackrel{\P}{\to}
v^\top Q_{\gamma_0}(\rho)v
-
v^\top Q_{\gamma_0}(\rho)U_0
\left[
\{1+\gamma_0s_{\gamma_0}(\rho)\}\I_K
+
U_0^\top Q_{\gamma_0}(\rho)U_0
\right]^{-1}
U_0^\top Q_{\gamma_0}(\rho)v.
\]
However, by definition,
\[
K_{\gamma_0,U_0}(\rho)
=
Q_{\gamma_0}(\rho)
-
Q_{\gamma_0}(\rho)U_0
\left[
\{1+\gamma_0s_{\gamma_0}(\rho)\}\I_K
+
U_0^\top Q_{\gamma_0}(\rho)U_0
\right]^{-1}
U_0^\top Q_{\gamma_0}(\rho).
\]
Hence
\[
v^\top G_{\rm pool}(\rho)v
\stackrel{\P}{\to}
v^\top K_{\gamma_0,U_0}(\rho)v.
\]
We now analyze the bias term. We start by recalling
\[
\psi_0(\rho)=\beta^\top K_{\gamma_0,U_0}(\rho)\beta.
\]
Then as before
\[
\bbeta^\top G_{\rm pool}(-\rho)\bbeta
\stackrel{\P}{\to}
\psi_0(\rho)
\]
locally uniformly for \(\rho\) in compact subsets of \((0,\infty)\) since $\bbeta$ is norm bounded. Next, since
\[
\frac{d}{d\rho}G_{\rm pool}(-\rho)
=
-G_{\rm pool}(\rho)^2,
\]
we have
\[
\bbeta^\top G_{\rm pool}(-\rho)^2\bbeta
=
-\frac{d}{d\rho}
\left[
\bbeta^\top G_{\rm pool}(-\rho)\bbeta
\right].
\]
Since all the convergences are locally uniform in $\rho$ on compact sets, and all resolvents are uniformly bounded by \(\rho^{-1}\) on compact subsets of \((0,\infty)\), differentiating the deterministic equivalent gives
\[
\bbeta^\top G_{\rm pool}(-\rho_0)^2\bbeta
\stackrel{\P}{\to}
-\psi_0'(\rho_0).
\]
Therefore
\[
B_n
\stackrel{\P}{\to}
\rho_0^2[-\psi_0'(\rho_0)].
\]
This completes the proof for fixed effects pooled ridge estimator.

\paragraph{Analysis of Pooled Ridgeless Estimator -- Fixed Effects:}
Recall the notation
\[
\widehat\Sigma_{\rm pool}=\frac1nX^\top X,
\qquad
G_{\rm pool}(-\rho)=(\widehat\Sigma+\rho I)^{-1},
\qquad
\rho>0.
\]
Let the spectral decomposition of \(\widehat\Sigma_{\rm pool}\) be
\[
\widehat\Sigma_{\rm pool}
=
\sum_{j=1}^p s_j u_ju_j^\top,
\qquad
s_j\ge 0.
\]
Also note that
\[
P_{\mathcal{C}(\Xm)}=\widehat\Sigma_{\rm pool}\widehat\Sigma_{\rm pool}^+
\]
be the orthogonal projection onto the row space of \(X\). Then
\[
I-P_{\mathcal{C}(\Xm)}
=
\sum_{j:s_j=0}u_ju_j^\top.
\]

%Now for each fixed \(n,p\) and for every vector \(b\),
%\[
%b^\top(I-P_{\mathcal{C}(\Xm)})b
%=
%\lim_{\rho\downarrow0}
%\rho^2 b^\top G_{\rm pool}(-\rho)^2b.
%\]
%Similarly,
%\[
%\widehat\Sigma_{\rm pool} G_{\rm pool}(-\rho)^2
%=
%\sum_{j:s_j>0}
%\frac{s_j}{(s_j+\rho)^2}u_ju_j^\top,
%\]
%and therefore, provided the nonzero eigenvalues are bounded away from zero
%\[
%\frac1n\tr(\widehat\Sigma_{\rm pool}^+)
%=
%\lim_{\rho\downarrow0}
%\frac1n\tr\{\widehat\Sigma G_{\rm pool}(-\rho)^2\},
%\]
%with the convention that the right-hand side may diverge if the nonzero spectrum approaches zero.

%We now justify exchanging the \(\rho\downarrow0\) and \(n,p\to\infty\) limits away from the interpolation threshold. 

By the anisotropic local law \citep{knowles2017anisotropic} and smallest singular value bounds for sample covariance matrices with bounded finite-rank deterministic perturbations \citep{bai2010spectral}, there exists \(c_{\gamma_0}>0\) such that, with probability tending to one, every nonzero eigenvalue of \(\widehat\Sigma_{\rm pool}\) lies in
\[
[c_{\gamma_0},1/c_{\gamma_0}].
\]
%\textcolor{red}{The finite-rank non-centering perturbation may create only finitely many outlier eigenvalues, but away from \(\gamma=1\) it does not create a macroscopic mass of eigenvalues near zero. Thus the normalized trace and bounded deterministic bilinear forms are unaffected by such finite-rank perturbations.}
On this high-probability event, for any bounded norm deterministic \(b\)
\[
\left|
\rho^2 b^\top G_{\rm pool}(-\rho)^2b
-
b^\top(I-P_{\mathcal{C}(\Xm)})b
\right|
\le
\frac{\rho^2}{c_{\gamma_0}^2}\|b\|^2.
\]
Since for \(s_j\ge c_{\gamma_0}\),
\[
0\le
\frac{\rho^2}{(s_j+\rho)^2}
\le
\frac{\rho^2}{c_{\gamma_0}^2},
\]
we have that
\[
\lim_{n\to\infty}b^\top(I-P_{\mathcal{C}(\Xm)})b
=
\lim_{\rho\downarrow0}\lim_{n\to\infty}
\rho^2 b^\top G_{\rm pool}(-\rho)^2b,
\]
since the deterministic equivalent is locally uniform on compact subsets of $(0,\infty)$.
Similarly, for the variance term,
\[
\frac1n\tr(\widehat\Sigma^+)
-
\frac1n\tr\{\widehat\Sigma G_{\rm pool}(-\rho)^2\}
=
\frac1n\sum_{j:s_j>0}
\left[
\frac1{s_j}
-
\frac{s_j}{(s_j+\rho)^2}
\right].
\]
For \(s_j\ge c_{\gamma_0}\),
\[
0\le
\frac1{s_j}
-
\frac{s_j}{(s_j+\rho)^2}
=
\frac{2\rho s_j+\rho^2}{s_j(s_j+\rho)^2}
\le
c_{\gamma_0}\rho.
\]
Since the number of nonzero eigenvalues is at most \(n\wedge p\),
\[
\left|
\frac1n\tr(\widehat\Sigma_{\rm pool}^+)
-
\frac1n\tr\{\widehat\Sigma_{\rm pool} G_{\rm pool}(-\rho)^2\}
\right|
\le
c_{\gamma_0}\rho.
\]
Hence
\[
\frac1n\tr(\widehat\Sigma^+)
=
\lim_{\rho\downarrow0}\lim_{n\to\infty}
\frac1n\tr\{\widehat\Sigma G_{\rm pool}(-\rho)^2\}.
\]
Thus, the ridgeless bias and variance are obtained as the \(\rho\downarrow0\) limits of the corresponding ridge bias and variance deterministic equivalents i.e. 
\[
B_+
=
\lim_{\rho_0\downarrow0}
\rho_0^2[-\psi_0'(\rho_0)]
\]
 and 
\[
V_+
=
\lim_{\rho_0\downarrow0}
V_\gamma(\rho_0).
\]

\subsection{Proof of Theorem \ref{thm:gen_ensemble}}
Here, we only demonstrate the analysis for the random effects and ridge regularization. All the remaining proofs, i.e. random effects ridgeless and fixed effects analyses, follow by obvious modifications of the following and the proof of Theorem \ref{thm:Pooled} by taking $K=1$. 
For the random effects calculations for ridge regression, note that by direct calculations
\begin{align*}
    R_{\rm ens, \mathbf{w}}=\sigma_\star^2+\frac{1}{p}\tr(\sum w^2_{\ell}\rho^2_{\ell}G^2_{\ell}(-\rho_{\ell}))+\frac{1}{p}\tr(\sum_{\ell\neq \ell'} w_{\ell}w_{\ell'}\rho_{\ell}\rho_{\ell'}G_{\ell}(-\rho_{\ell})G_{\ell'}(-\rho_{\ell'})),
\end{align*}
where $G_{\ell}(-\rho_{\ell})=(\hat{\Sigma}_{\ell}+\rho_{\ell}\I)^{-1}$. The analysis of $+\frac{1}{p}\tr(\sum w^2_{\ell}\rho^2_{\ell}G^2_{\ell}(-\rho_{\ell}))$ is verbatim the same as in the proof of Theorem \ref{thm:Pooled} by taking $K=1$ and therefore we focus only on $\frac{\tau^2}{p}\tr(\sum_{\ell\neq \ell'} w_{\ell}w_{\ell'}\rho_{\ell}\rho_{\ell'}G_{\ell}(-\rho_{\ell})G_{\ell'}(-\rho_{\ell'}))$.
In the analysis, we drop the term $(-\rho_{\ell})$ from the notation of $G_{\ell}(-\rho_{\ell})$ whenever clear from context.

Fix \(\ell\neq j\). We claim that
\[
\frac{1}{p}\tr(G_\ell G_j)
=
\frac{1}{p}\tr(Q_\ell Q_j)
+
o_{\P}(1).
\]
To that end we write
\[
\mathbb{X}_\ell=X_\ell^0+\mathbf 1_{n_\ell}\bmu_\ell^\top,
\]
where $1_{n_\ell}$ is a $n_{\ell}\times 1$ vector of $1$'s, and the $i^{\rm th}$ row of \(X_\ell^0\) is \(\Sigma^{1/2}\bZ_{i,\ell}\). Let
\[
\widehat\Sigma_\ell^0
=
\frac{1}{n_\ell}(X_\ell^0)^\top X_\ell^0,
\qquad
G_\ell^0
=
(\widehat\Sigma_\ell^0+\rho_\ell \I)^{-1}.
\]
Then
\[
\widehat\Sigma_\ell-\widehat\Sigma_\ell^0
=
\frac{1}{n_\ell}(X_\ell^0)^\top \mathbf 1_{n_\ell}\bmu_\ell^\top
+
\frac{1}{n_\ell}\bmu_\ell\mathbf 1_{n_\ell}^\top X_\ell^0
+
\bmu_\ell\bmu_\ell^\top .
\]
Each term on the right-hand side has rank at most one, and hence
\[
\operatorname{rank}(\widehat\Sigma_\ell-\widehat\Sigma_\ell^0)\leq 3.
\]
By matrix inverse identity,
\[
G_\ell-G_\ell^0
=
-G_\ell(\widehat\Sigma_\ell-\widehat\Sigma_\ell^0)G_\ell^0.
\]
Therefore
\[
\operatorname{rank}(G_\ell-G_\ell^0)\leq 3.
\]
Moreover, since \(\rho_\ell>0\), 
\[
\|G_\ell\|_{\mathrm{op}}\leq \rho_\ell^{-1},
\qquad
\|G_\ell^0\|_{\mathrm{op}}\leq \rho_\ell^{-1}.
\]
Similarly,
\[
\operatorname{rank}(G_j-G_j^0)\leq 3,
\qquad
\|G_j\|_{\mathrm{op}}\leq \rho_j^{-1},
\qquad
\|G_j^0\|_{\mathrm{op}}\leq \rho_j^{-1}.
\]
Now
\[
G_\ell G_j-G_\ell^0G_j^0
=
(G_\ell-G_\ell^0)G_j
+
G_\ell^0(G_j-G_j^0).
\]
Thus
\[
\begin{aligned}
\left|
\frac{1}{p}\tr(G_\ell G_j)
-
\frac{1}{p}\tr(G_\ell^0G_j^0)
\right|
&\leq
\frac{1}{p}
\left|
\tr\{(G_\ell-G_\ell^0)G_j\}
\right|
\\
&\quad+
\frac{1}{p}
\left|
\tr\{G_\ell^0(G_j-G_j^0)\}
\right| .
\end{aligned}
\]
Using
\[
|\tr(AB)|
\leq
\operatorname{rank}(A)\|A\|_{\mathrm{op}}\|B\|_{\mathrm{op}},
\]
whenever \(A\) is finite-rank, together with the uniform boundedness of the resolvents, we get
\[
\left|
\frac{1}{p}\tr(G_\ell G_j)
-
\frac{1}{p}\tr(G_\ell^0G_j^0)
\right|
\leq C'(\rho_{\ell}^{-1},\rho_{j}^{-1})/p,
\]
for a finite constant $C'(\rho_{\ell}^{-1},\rho_{j}^{-1})$ depending on $\rho_{\ell}^{-1},\rho_{j}^{-1}$. Therefore
\[
\frac{1}{p}\tr(G_\ell G_j)
=
\frac{1}{p}\tr(G_\ell^0G_j^0)
+
o_{\P}(1),
\]
uniformly in $\ell,j$ (since $K$ is finite). It remains to replace the zero-mean resolvents by their deterministic equivalents. To that end we write
\[
\begin{aligned}
\frac{1}{p}\tr(G_\ell^0G_j^0)
-
\frac{1}{p}\tr(Q_\ell Q_j)
&=
\frac{1}{p}\tr\{(G_\ell^0-Q_\ell)G_j^0\}
\\
&\quad+
\frac{1}{p}\tr\{Q_\ell(G_j^0-Q_j)\}.
\end{aligned}
\]
Because the clusters are independent, conditional on \(G_j^0\), the matrix \(G_j^0\) is deterministic with respect to the randomness in \(G_\ell^0\). Moreover,
\[
\|G_j^0\|_{\mathrm{op}}\leq \rho_j^{-1}.
\]
Therefore, the anisotropic deterministic-equivalence \citep{knowles2017anisotropic} implies that for any deterministic matrix \(A_p\) with uniformly bounded operator norm,
\[
\frac{1}{p}\tr\{A_p(G_\ell^0-Q_\ell)\}
\stackrel{\P}{\to} 0.
\]
Applying this conditionally with \(A_p=G_j^0\) gives
\[
\frac{1}{p}\tr\{(G_\ell^0-Q_\ell)G_j^0\}
=
o_{\P}(1).
\]
Similarly, \(Q_\ell\) is deterministic and uniformly bounded in operator norm, and hence
\[
\frac{1}{p}\tr\{Q_\ell(G_j^0-Q_j)\}
=
o_{\P}(1).
\]
Consequently,
\[
\frac{1}{p}\tr(G_\ell^0G_j^0)
=
\frac{1}{p}\tr(Q_\ell Q_j)
+
o_{\P}(1).
\]
Combining this with the finite-rank perturbation bound yields
\[
\frac{1}{p}\tr(G_\ell G_j)
=
\frac{1}{p}\tr(Q_\ell Q_j)
+
o_{\P}(1).
\]
Therefore,
\[
\frac{1}{p}\tr(\sum_{\ell\neq \ell'} w_{\ell}w_{\ell'}\rho_{\ell}\rho_{\ell'}G_{\ell}(-\rho_{\ell})G_{\ell'}(-\rho_{\ell'}))
=
\sum_{\ell\neq j}
w_\ell w_j\rho_\ell\rho_j
\frac{1}{p}\tr(Q_\ell Q_j)
+
o_{\P}(1),
\]
as promised.

\subsection{Proof of Theorem \ref{thm:ensemble_optimal}}
The proof of part (i) follows from Theorem \ref{thm:gen_ensemble} by subsequently considering the optimal tuning result in \cite{dobriban2018high} since for the random effects model, the effect of the mean-shift does not reflect itself in the results owing to the lack of their effect asymptotically on the empirical spectral distributions of $\hat{\Sigma}_{\ell}$'s. The proof of part (ii) then follows from the standard Lagrange multiplier method and first-order condition analysis of the optimal weighting optimization problem over $\mathbf{w}\in \mathbb{R}_+$ such that $\mathbf{1}^\top\mathbf{w}=1$. Indeed, the unconstrained optimizer operates under the constraint $\mathbf{1}^\top\mathbf{w}=1$ using the Lagrange multiplier method to yield the desired result, and if an optimizer satisfies the non-negativity constraint, the result follows from standard analysis. 

\subsection{Proof of Corollary \ref{cor:comp_k2_iso_re}} We only verify the second claim and that $\mathcal{R}_{\rm excess}:=\frac{\kappa_1\kappa_2(1-\kappa_1\kappa_2)}
      {\kappa_0(\kappa_1+\kappa_2-2\kappa_1\kappa_2)}>1$.
The rest of the proof follows from standard algebra by evaluating the expressions in Theorem \ref{thm:ensemble_optimal} and Theorem \ref{thm:gen_ensemble_re} through integral w.r.t. classical Marchenko-Pastur distribution \citep[Section 3.1.1]{bai2010spectral}. We omit the algebraic details.

We now prove the claim that $\mathcal{R}_{\rm excess}>1$. First note that $\kappa(\gamma)$ is the positive root of
\[
    \kappa^2+
    \left(
        \alpha-1+\frac{1}{\gamma}
    \right)\kappa
    -
    \alpha
    =
    0.
\]
Since the polynomial is negative at $0$ and positive at $1$,
\[
    0<\kappa(\gamma)<1
\]
for every $\gamma>0$. Hence
\[
    0<\kappa_1,\kappa_2,\kappa_0<1.
\]

Define
\[
    h(u)
    =
    \frac{\alpha}{u}-u-\alpha+1,
    \qquad
    0<u<1.
\]
From the defining equation for $\kappa(\gamma)$, dividing by
$\kappa(\gamma)$ gives
\[
    h(\kappa(\gamma))
    =
    \frac{1}{\gamma}.
\]
Therefore,
\[
    h(\kappa_1)=\frac{1}{\gamma_1},
    \qquad
    h(\kappa_2)=\frac{1}{\gamma_2},
    \qquad
    h(\kappa_0)=\frac{1}{\gamma_0}.
\]
Since
\[
    \frac{1}{\gamma_0}
    =
    \frac{1}{\gamma_1}
    +
    \frac{1}{\gamma_2},
\]
we have
\[
    h(\kappa_0)
    =
    h(\kappa_1)+h(\kappa_2).
\]

Now write
\[
    x=\kappa_1,
    \qquad
    y=\kappa_2.
\]
The optimally weighted ensemble excess-risk contribution, divided by
$\tau^2$, is
\[
    q(x,y)
    =
    \frac{
    xy(1-xy)
    }{
    x+y-2xy
    }.
\]
Thus
\[
    \mathcal R_{\mathrm{excess}}
    =
    \frac{q(x,y)}{\kappa_0}.
\]
It is therefore enough to prove
\[
    q(x,y)>\kappa_0.
\]

Because
\[
    h'(u)
    =
    -\frac{\alpha}{u^2}-1<0,
\]
the function $h$ is strictly decreasing on $(0,1)$. Hence
\[
    q(x,y)>\kappa_0
\]
is equivalent to
\[
    h(q(x,y))<h(\kappa_0).
\]
Since
\[
    h(\kappa_0)=h(x)+h(y),
\]
it is enough to show
\[
    h(q(x,y))<h(x)+h(y).
\]

A direct algebraic calculation gives
\[
\begin{aligned}
    h(q(x,y))-h(x)-h(y)
    &=
    -
    \frac{
    (1-x)(1-y)
    \left[
        \alpha(x+y-2xy)
        +(1-xy)(x+y-xy)
    \right]
    }{
    (1-xy)(x+y-2xy)
    }.
\end{aligned}
\]
Now \(0<x,y<1\), so
\[
    1-x>0,
    \qquad
    1-y>0,
    \qquad
    1-xy>0,
\]
and
\[
    x+y-2xy
    =
    x(1-y)+y(1-x)>0.
\]
Also,
\[
    \alpha(x+y-2xy)>0
\]
because \(\alpha>0\), and
\[
    (1-xy)(x+y-xy)>0.
\]
Therefore the entire fraction is strictly positive, and the leading
minus sign implies
\[
    h(q(x,y))-h(x)-h(y)<0.
\]
Thus
\[
    h(q(x,y))<h(x)+h(y)=h(\kappa_0).
\]
Since \(h\) is strictly decreasing, this implies
\[
    q(x,y)>\kappa_0.
\]
Therefore
\[
    \frac{q(x,y)}{\kappa_0}>1,
\]
which is exactly
\[
    \mathcal R_{\mathrm{excess}}>1.
\]

%%%%%%%%%%%%%%%%%%%%%%%%%%%%%%%%%%%%%%%%%%%%%%%%%%%%%%%
%\newpage

%\newpage
%\input{checklist.tex}

\end{document}